\newtheorem{theorem}{Theorem}[section]
\newtheorem{proposition}[theorem]{Proposition}
\newtheorem{lemma}[theorem]{Lemma}
\tikzset{
modal/.style={>=stealth,shorten >=1pt,shorten <=1pt,auto,node distance=1.5cm,semithick},
world/.style={circle,draw,minimum size=0.5cm,fill=gray!15},
point/.style={circle,draw,inner sep=0.5mm,fill=black},
reflexive above/.style={->,loop,looseness=7,in=120,out=60},
reflexive below/.style={->,loop,looseness=7,in=240,out=300},
reflexive left/.style={->,loop,looseness=7,in=150,out=210},
reflexive right/.style={->,loop,looseness=7,in=30,out=330}}
\newenvironment{newlist}
   {\begin{list}{}{\setlength{\labelsep}{0.15cm}
\setlength{\itemsep}{0cm}
\setlength{\topsep}{0.15cm}
                   \setlength{\labelwidth}{0.7cm}
                      \setlength{\leftmargin}{0.75cm}}}  
   {\end{list}}
\newcommand\nbd[1]{\protect\nobreakdash#1\hspace{0pt}}
\begin{document}
\begin{frontmatter}
\journal{ArXiv}
\title{A Study of Subminimal Logics of Negation and their Modal Companions}
\author[dj]{Nick~Bezhanishvili}
\ead{N.Bezhanishvili@uva.nl}
\author[ac]{Almudena~Colacito}
\address[ac]{Mathematical Institute, University of Bern \\ Sidlerstrasse 5, 3012 Bern, Switzerland}
\ead{almudena.colacito@math.unibe.ch}
\author[dj]{Dick~de~Jongh}
\address[dj]{Institute for Logic, Language and Computation, \\ University of Amsterdam, The Netherlands}
\ead{D.H.J.deJongh@uva.nl}


\begin{abstract}We study propositional logical systems arising from the language of Johansson's minimal  logic and obtained by weakening the requirements for the negation operator. We present their semantics as a variant of neighbourhood semantics. We use duality and completeness results to show that there are uncountably many subminimal logics. We also give model-theoretic and algebraic definitions of filtration for minimal logic  and show that they are dual to each other. These constructions ensure that the propositional minimal logic has the finite model property.  Finally, we define and investigate bi-modal companions with non-normal modal operators for some relevant subminimal systems, and give infinite axiomatizations for these bi-modal companions. 
\end{abstract}


\end{frontmatter}

\section{Introduction}

Minimal propositional calculus (\emph{Minimalkalk\"ul}, denoted here as $\mathsf{MPC}$) is the system obtained from the positive fragment of intuitionistic propositional calculus (equivalently, positive logic~\cite{rasiowa1974algebraic}) by adding a unary negation operator satisfying the so-called principle of contradiction (sometimes referred to as \emph{reductio ad absurdum}, e.g., in~\cite{odintsov2008constructive}). This system was introduced in this form by Johansson~\cite{johansson1937minimalkalkul} in 1937 by discarding \emph{ex falso quodlibet} from the standard axioms for intuitionistic logic. The system proposed by Johansson has its roots in Kolmogorov's formalization of intuitionistic logic~\cite{kolmogorov1925principle}. The axiomatization proposed by Johansson preserves the whole positive fragment and most of the negative fragment of Heyting's intuitionistic logic. As a matter of fact, many important properties of negation provable in Heyting's system remain provable (in some cases, in a slightly weakened form) in minimal logic. 

In this work, we focus on propositional logical systems arising from the language of minimal logic and obtained by weakening the requirements for the negation operator in a `maximal way'. More precisely, the bottom element of the bounded lattice of logics considered here is the system where the unary operator $\neg$ (that we still call `negation') does not satisfy any conditions except for being functional. The top element of this lattice is minimal logic. We use the term \emph{N-logic} to denote an arbitrary logical system in this lattice. This  setting is paraconsistent, in the sense that contradictory theories do not necessarily contain all formulas. 

In this paper we continue the study of N-logics started in~\cite{Colacito:Thesis:2016,colacito2016subminimal}. 
We investigate these logics  from several different perspectives. In Section~\ref{s:prelim} we give an algebraic and model-theoretic presentation of N-logics, and provide a brief recap of the main duality and completeness results from~\cite{Colacito:Thesis:2016}. We also review the semantics introduced for the N-logics in~\cite{Colacito:Thesis:2016,colacito2016subminimal}---that we call N-semantics---and show that it is a variant of standard neighbourhood semantics.

In Section~\ref{s:continuum}, we exploit these results to show that the lattice of N-logics has the cardinality of the continuum. The proofs from this section are obtained by exporting and adapting techniques of~\cite{dhjdejongh,jankov1968,bezhanishvili2006lattices}. 

Section~\ref{s:filtration} is devoted to a study of the method of  filtration for the basic N\nbd{-}logic. Model-theoretic and algebraic definitions of filtration are introduced and compared. 
This leads to the finite model property of the minimal logic.

Finally, Section~\ref{s:modal} concludes the article by introducing (bi-)modal systems that are proved to play the role of modal companions for N-logics. More precisely, the language of these systems contains a normal (namely, {\sf S4}) modality resulting from the positive fragment of intuitionistic logic, and a non-normal modality resulting from the negation operator.
After characterizing these logics in terms of standard neighbourhood semantics, we continue using the equivalent N\nbd{-}semantics. We then give a proof that the standard G\"odel translation of intuitionistic logic into {\sf S4} can be extended to translations of certain N-logics into these bi-modal systems. To the best of our knowledge this is the first use of non-normal modal operators in the context of the G\"odel translation. On the other hand, modal systems using a mix of normal and non-normal modalities have been recently explored in the evidence-based semantics of epistemic logic~\cite{vBP11,aybuke17}.


\section{Preliminaries}\label{s:prelim}

In this preliminary section we present the main technical tools that will be used throughout the paper. We start with a brief introduction to the Kripke semantics of minimal logic (here called \emph{N-semantics}), in line with the tradition of intuitionistic logic. Later we present the algebraic semantics, and state some basic facts. In order to keep the structure of the paper as simple as possible, we 
skip  the broader and introductory account of the topic  and refer the interested reader to~\cite{Colacito:Thesis:2016,colacito2016subminimal}. For a proof-theoretic account, see~\cite{BilkovaColacito2019}.

Let $\mathcal{L}({\sf Prop})$ be the propositional language, where ${\sf Prop}$ is a countable set of propositional variables, generated by the following grammar: 
\[
p \mid \top \mid \varphi \land \varphi \mid \varphi \lor \varphi \mid \varphi \to \varphi \mid \neg \varphi
\]  
\noindent 
where $p \in {\sf Prop}$. We omit $\bot$ from the language. We call a formula \emph{positive} if it contains only connectives from $\{\land, \lor, \to, \top\}$, and we refer to the positive fragment of intuitionistic logic as \emph{positive logic}. We start by considering a system defined by the axioms of positive logic, with the additional axiom $(p\leftrightarrow q)\to(\neg p\leftrightarrow\neg q)$ defining the behaviour of $\neg$. We call the resulting system {\sf N}. We fix the positive logical fragment, and we strengthen the negation operator up to reaching minimal propositional logic, which can be seen as the system obtained by adding the axiom $(p\to q) \land (p\to\neg q)\to \neg p$ to positive logic~\cite{rasiowa1974algebraic}. An alternative axiomatization of minimal logic is obtained by extending {\sf N} with the axiom $(p\to\neg p)\to\neg p$~\cite[Proposition 1.2.5]{Colacito:Thesis:2016}. 

If we interpret $\neg$ as a \textquoteleft{modality}', and disregard the fact that we consider extensions of positive logic, the basic system {\sf N} can be seen as an extension of classical modal logic~(see~\cite{pacuit2017neighborhood}) which is based on the rule $p \leftrightarrow q\,/\,\square p \leftrightarrow\square q$. Thus, {\sf N} can be regarded as a weak intuitionistic modal logic that---as far as we know---has not been previously studied. Note that extending it with more of the properties of a negation would lead to \textquoteleft{very non-standard}' modal logics. This relationship with modal logic will be further clarified towards the end of the paper, where the negation will be interpreted by a full-fledged modal operator.

A first algebraic account of Johansson's logic can be found in Rasiowa's  work on non-classical logic~\cite{rasiowa1974algebraic}, where the algebraic counterpart of minimal logic is identified as the variety of contrapositionally complemented lattices. A contrapositionally complemented lattice is an algebraic structure $\langle A, \land, \lor, \to, \neg, 1\rangle$, where $\langle A, \land, \lor, \to, 1 \rangle$ is a relatively pseudo-complemented lattice (which algebraically characterizes positive logic~\cite{rasiowa1974algebraic}) and the unary fundamental operation $\neg$ satisfies the identity $(x \to \neg y) \approx (y \to \neg x)$. The variety presented by Rasiowa is term-equivalent to the variety of relatively pseudo-complemented lattices with a negation operator defined by the algebraic version of the principle of contradiction $(x \to y) \land (x \to \neg y) \to \neg x \approx 1$, originally employed in Johansson's axiomatization. Observe that Heyting algebras can be seen as contrapositionally complemented lattices where $\neg 1$ is a distinguished bottom element $0$. 

We further generalize the notion of Heyting algebra to that of an N\nbd{-}algebra. An \emph{N-algebra} is an algebraic structure $\mathbf{A} = \langle A,\land,\lor,\to,\neg,1\rangle$, where $\langle A,\land,\lor,\to,1\rangle$ is a relatively pseudo-completemented lattice and $\neg$ is a unary operator satisfying the identity $(x\leftrightarrow y)\to(\neg x\leftrightarrow\neg y)\approx 1$. The latter can be equivalently formulated as $x\land\neg y\approx x\land \neg(x\land y)$. Note that this variety plays a fundamental role in the attempts of defining a connective over positive logic. In fact, the considered equation states that the function $\neg$ is a \emph{compatible function} (or \emph{compatible connective}), in the sense that every congruence of $\langle A, \land, \lor, \to, 1 \rangle$ is a congruence of $\langle A, \land, \lor, \to, \neg, 1 \rangle$. This is somehow considered a minimal requirement when introducing a new connective over a fixed setting (see, e.g.,~\cite{caicedo2001algebraic,ertola2007compatible}). Clearly, with every N-logic {\sf L} we can associate a variety of N-algebras. Each of these logics is complete with respect to its algebraic semantics~\cite{Colacito:Thesis:2016}. Contrapositionally complemented lattices are the strongest structures that we consider here, and can be seen as the variety of N-algebras defined by the equation $(x \to \neg x) \to \neg x \approx 1$, or $x \to \neg x \approx \neg x$. We also consider the two varieties of N-algebras defined, respectively, by the identity $(x \land \neg x) \to \neg y \approx 1$, and by the identity $(x \to y) \to (\neg y \to \neg x) \approx 1$. They were studied in detail in~\cite{Colacito:Thesis:2016}, and we shall refer to the corresponding logics as \emph{negative ex falso} logic ({\sf NeF}) 
and \emph{contraposition} logic ({\sf CoPC}). We point out that the logic {\sf CoPC} has appeared before under the name \textquoteleft{Subminimal Logic}' with a completely different semantics~(\cite{Haz92,Haz95}, \cite[Section 8.33]{Hum11}). It was proved in~\cite{Colacito:Thesis:2016,colacito2016subminimal} that the following relations hold between the considered logical systems:
\[
{\sf N} \subset {\sf NeF} \subset {\sf CoPC} \subset {\sf MPC},
\]
where the strict inclusion ${\sf L}_1 \subset {\sf L}_2$ means that every theorem of ${\sf L}_1$ is a theorem of ${\sf L}_2$, but there is at least one theorem of ${\sf L}_2$ that is not provable in ${\sf L}_1$. Note the strict inclusion ${\sf NeF} \subset {\sf CoPC}$ can be seen semantically (through the semantics we introduce below), by taking $\langle W, \leq, {\rm N}\rangle$, with $W \coloneqq \{w, v\}$, $\leq \,\coloneqq \{(w,v)\}$, and ${\rm N}(\emptyset) = {\rm N}(W) \coloneqq \{v\}$, ${\rm N}(\{v\}) \coloneqq W$. 

Recall that an \emph{intuitionistic Kripke frame} $\mathfrak{F}$ is a partially ordered set (briefly, poset) $\langle W, \leq \rangle$, and a Kripke model is a frame $\mathfrak{F}$ equipped with a valuation $V$ assigning to every propositional variable $p \in {\sf Prop}$ an upward closed subset (\emph{upset}) $V(p) \in \mathcal{U}(W,\leq)$ of $\mathfrak{F}$ where $p$ is true. In the subminimal setting, we call an {\em N-frame} (sometimes we may call it just a {\em frame}) a triple $\mathfrak{F} = \langle W, \leq, {\rm N} \rangle$, where $\langle W, \leq \rangle$ is a poset and the function ${\rm N} \colon \mathcal{U}(W, \leq) \to \mathcal{U}(W, \leq)$ satisfies, for every $X, Y \in \mathcal{U}(W, \leq)$,\footnote{This property is equivalent to $w \in {\rm N}(X) \Longleftrightarrow w \in {\rm N}(X \cap R(w))$ for every $w \in W$ and $X \in \mathcal{U}(W,\leq)$; see, e.g., \cite[Lemma 4.3.1]{Colacito:Thesis:2016}.}
\begin{equation}\label{eq:locality}
{\rm N}(X) \cap Y = {\rm N}(X \cap Y) \cap Y.
\end{equation}
An N-model (sometimes we may call it just a {\em model}) is again a pair $\langle \mathfrak{F}, V \rangle$, where $\mathfrak{F}$ is a frame and $V$  a valuation from the set of propositional variables to $\mathcal{U}(W, \leq)$. Given a model $\langle \mathfrak{F}, V \rangle$, we define truth of positive formulas inductively as in the intuitionistic setting, and say that a negative formula $\neg\varphi$ is true at a node $w$, written $\langle \mathfrak{F}, V \rangle, w \models \neg\varphi$, if $w \in {\rm N}(V(\varphi))$, where $V(\varphi) \coloneqq \{w \in W \colon \mathfrak{M}, w \models \varphi\}$. We use the customary notation and write $\mathfrak{F} \models \varphi$ if $\langle \mathfrak{F}, V \rangle, w \models \varphi$ holds for every $w \in W$ and every valuation $V$. Note that $\mathfrak{F} \models (p \land \neg p) \to \neg q$ holds on a frame satisfying property~(\ref{eq:locality}) if and only if $X \cap {\rm N}(X) \subseteq {\rm N}(Y)$ for arbitrary upsets $X, Y$, and $\mathfrak{F} \models (p \to q) \to (\neg q \to \neg p)$ holds if and only if the function ${\rm N}$ is antitone (i.e., $X \subseteq Y$ implies ${\rm N}(Y) \subseteq {\rm N}(X)$). The least element of a poset, when it exists, is called a \emph{root}, and we say that a frame is \emph{rooted} if its underlying poset has a root. Given a poset $\langle W, \leq \rangle$, we use $R(w)$ to denote the set $\{v \in W \colon w \leq v\}$ of successors of $w$ in $W$. In the rest of the paper, we refer to this \textquoteleft Kripke style' semantics for subminimal logics as {{\rm N}-semantics}.

For a frame $\mathfrak{F} = \langle W, \leq, {\rm N} \rangle$, we define the corresponding neighbourhood frame $\mathfrak{F_n} = \langle W, \leq, \mathfrak{n} \rangle$ by setting a neighbourhood function $\mathfrak{n} \colon W \to \mathcal{P}(\mathcal{U}(W, \leq))$ to be $\mathfrak{n}(w)\coloneqq\{X\subseteq W \mid w\in {\rm N}(X)\}$. A model is again a pair $\langle \mathfrak{F}, V \rangle$ consisting of a neighbourhood frame, and a valuation $V$ defined in the same way as for the N-semantics. Then, $\langle \mathfrak{F_n}, V \rangle, w \models \neg\varphi$ if and only if $V(\varphi)\in \mathfrak{n}(w)$ or, equivalently, $w \in {\rm N}(V(\varphi))$, that is, $\langle \mathfrak{F}, V \rangle, w \models \neg\varphi$. Conversely, take a neighbourhood frame $\mathfrak{F_n}=\langle W, \leq, \mathfrak{n} \rangle$ such that $\mathfrak{n} \colon W \to \mathcal{P}(\mathcal{U}(W, \leq))$ is a monotone function (i.e.,\ if $w\leq v$ then $\mathfrak{n}(w)\subseteq \mathfrak{n}(v)$) satisfying $X \in \mathfrak{n}(w)$ if and only if $X\cap R(w) \in \mathfrak{n}(w)$ for every upset $X$. The triple $\mathfrak{F}=\langle W, \leq, {\rm N}\rangle$ with ${\rm N}(X)\coloneqq\{w\in W\mid X\in \mathfrak{n}(w)\}$ is an N-frame, and the two definitions are mutually inverse. It will become clear in Section~\ref{s:modal} that an approach using the N-semantics is helpful in practice. Further, the N-semantics is synergic to the algebraic approach, as the next paragraph shows.

For an N-algebra $\mathbf{A}$, we consider the set $W_{\mathbf{A}}$ of prime filters of $\mathbf{A}$, and let $\widehat{a} \coloneqq \{w\in W_{\mathbf{A}}\mid a\in w\}$.  
Then the triple $\mathfrak{F}_{\mathbf{A}}=\langle W_{\mathbf{A}},\subseteq,{\rm N}_{\mathbf{A}} \rangle$ is a frame, where 
\[
{\rm N}_{\mathbf{A}}(X)\coloneqq\{w\in W_{\mathbf{A}}\,|\,(\exists \neg a\in w)({R(w)}\cap \widehat{a}={R(w)}\cap X)\},
\] 
for any upset $X\in\mathcal{U}(W_{\mathbf{A}}, \subseteq)$. This definition makes sure that ${\rm N}_{\mathbf{A}}(\widehat{b})$ for $b \in \mathbf{A}$ includes every filter $w$ which contains $\neg a$, for any $a \in \mathbf{A}$ that is equivalent to $b$ with respect to $w$ (i.e., ${R(w)}\cap \widehat{a}={R(w)}\cap \widehat{b}$). Observe that the notion of prime filter in this context does not require the filter to be proper, i.e., the whole algebra $A$ is always a prime filter. On the other hand, starting from a frame $\mathfrak{F}$, we obtain an N-algebra $\mathbf{A}_{\mathfrak{F}} = \langle \mathcal{U}(W,\leq),\cap,\cup,\to,{\rm N},W \rangle$ whose universe is the set of upsets of $\mathfrak{F}$ equipped with the usual intersection, union, Heyting implication, and unit $W$, and with the unary operator given by the function ${\rm N}$. Note that the N-algebra $\mathbf{A}$ embeds into the N-algebra $\mathbf{A}_{\mathfrak{F}_{\mathbf{A}}}$ via the map $\alpha\colon a\mapsto\widehat{a}$. A consequence of this is that each valuation $\mu \colon {\sf Prop} \to \mathbf{A}$ on $\mathbf{A}$ gives rise to a valuation $V=\alpha\circ \mu$ on $\mathfrak{F}_{\mathbf{A}}$. 

In order to obtain a full duality result between algebraic and frame-theoretic structures, in analogy with intuitionistic logic we introduce general frames. A \emph{general frame} is a quadruple $\mathfrak{F}=\langle W,\leq,\mathcal{P},{\rm N}\rangle$, where $\langle W,\leq\rangle$ is a partially ordered set, $\mathcal{P} \subseteq \mathcal{U}(W,\leq)$ contains $W$ and is closed under $\cup$, $\cap$, Heyting implication $\to$, and ${\rm N}\colon \mathcal{P}\to\mathcal{P}$ satisfies for all $X,Y\in\mathcal{P}$, ${\rm N}(X) \cap Y={\rm N}(X\cap Y) \cap Y$. Elements of $\mathcal{P}$ are called \emph{admissible sets}. Note that 
$\emptyset$ need not be admissible. We next recall the definition of \emph{refined} and \emph{compact} general frames, see e.g.,~\cite{CZ97} for analogous definitions in the intuitionistic setting. If a general frame $\mathfrak{F}$ is refined and compact, we call it a \emph{descriptive frame}. Finally, a \emph{top descriptive frame} $\mathfrak{F} = \langle W,\leq,\mathcal{P},{\rm N}, t\rangle$ is a descriptive frame whose partially ordered underlying set $\langle W,\leq\rangle$ has a greatest element $t$ included in every admissible upset $X\in\mathcal{P}$. Following~\cite{bezhanishviliuniversal}, we call \emph{top model} a pair $\langle \mathfrak{F}, V \rangle$ where $\mathfrak{F}$ is a top descriptive frame and $V$ is a valuation on $\mathfrak{F}$ which makes every propositional variable true at the top node. Every finite frame with a top node is a top descriptive frame with a set of admissible upsets $\{X \in \mathcal{U}(W,\leq) \colon X \ne \emptyset\}$.

It can be proved~\cite{Colacito:Thesis:2016,bezhanishvili2017epimorphisms} that the correspondence between $\mathfrak{F}$ and $\mathbf{A}_{\mathfrak{F}}$, and between $\mathbf{A}$ and $\mathfrak{F}_{\mathbf{A}}$ that holds for every frame and N-algebra, in the case of top descriptive frames gives rise to a proper duality. More precisely, given a top descriptive frame $\mathfrak{F} = \langle W,\leq,\mathcal{P},{\rm N}, t\rangle$, the structure $\mathbf{A}_{\mathfrak{F}}=\langle \mathcal{P},\cap,\cup,\to,{\rm N},W \rangle$ is the N-algebra dual to $\mathfrak{F}$, and the set of prime filters of any N-algebra $\mathbf{A} = \langle A,\land,\lor,\to,\neg,1\rangle$ induces a dual top descriptive frame $\mathfrak{F}_{\mathbf{A}} = \langle W_{\mathbf{A}}, \subseteq, \mathcal{P}_{\mathbf{A}}, {\rm N}_{\mathbf{A}}, A \rangle$, where $\mathcal{P}_{\mathbf{A}}$ is the set $\{\hat{a} \colon a \in A\}$, and the map ${\rm N}_{\mathbf{A}}$ is defined as ${\rm N}_{\mathbf{A}}(\widehat{a})=\widehat{(\neg a)}$. Moreover, we have $\mathfrak{F} \cong \mathfrak{F}_{\mathbf{A}_{\mathbf{F}}}$ and $\mathbf{A} \cong \mathbf{A}_{\mathfrak{F}_{\mathbf{A}}}$. Observe that the fact that prime filters of $\mathbf{A}$ do not need to be proper ensures that the corresponding frame structure has a top element. 

As in the case of Heyting algebras, for N-algebras there exists a one-to-one correspondence between congruences and filters. We can therefore characterize subdirectly irreducible N-algebras as those N-algebras containing a second greatest element, or equivalently, as those N-algebras $\mathbf{A}$ whose dual frame $\mathfrak{F}_{\mathbf{A}}$ is a rooted top descriptive frame. We call a top descriptive frame $\mathfrak{F}$ {finitely generated} if $\mathbf{A}_{\mathfrak{F}}$ is a finitely generated N-algebra, thereby obtaining a completeness result for every N-logic with respect to the class of its finitely generated rooted top descriptive frames.


\section{Continuum Many Logics}\label{s:continuum}

In this section, we construct continuum many N-logics by exporting and adapting techniques of~\cite{dhjdejongh,jankov1968,bezhanishvili2006lattices}. We start from a countable family of positive formulas that we adapt so to define uncountably many independent subsystems of minimal logic containing the basic logic {\sf N}. 

Given two finite rooted posets $\mathfrak{F}, \mathfrak{G}$, we write $\mathfrak{F} \leq \mathfrak{G}$ if $\mathfrak{F}$ is an order-preserving image of $\mathfrak{G}$, that is, if there is an onto map $f \colon \mathfrak{G} \twoheadrightarrow \mathfrak{F}$ which preserves the order. This relation can be proved to be a partial order on the set of finite (rooted) posets~\cite{bezhanishvili2006lattices}. Consider the sequence ${\rm \Delta}=\{\mathfrak{F}_n\mid n \in \mathbb{N}\}$ of finite rooted posets with a top node (Figure~\ref{fig:1}). The sequence ${\rm \Delta}$ is obtained from the antichain presented in~\cite[Figure 2]{bezhanishvili2017locally} by adding three nodes on the top. An argument resembling the one in~\cite[Lemma 6.12]{bezhanishvili2017locally} shows that the sequence ${\rm \Delta}$ is a $\leq$-antichain. We quickly sketch it here. If $f \colon \mathfrak{F}_n \twoheadrightarrow \mathfrak{F}_m$ with $n \geq m$ is order-preserving, then the upset of $\mathfrak{F}_m$ consisting of points of depth $\leq (m+1)$ must be the image of the upset of $\mathfrak{F}_n$ consisting of points of depth $\leq (m+1)$. Thus, for $x_{m+2} \in \mathfrak{F}_m$, there is $y \in \mathfrak{F}_n$ such that $f(y)=x_{m+2}$. But then, there is a $z \in \mathfrak{F}_n$ such that $y \leq z$ and $f(z)=y_{m+1}$. Since $x_{m+2} \not\leq y_{m+1}$, we get a contradiction.

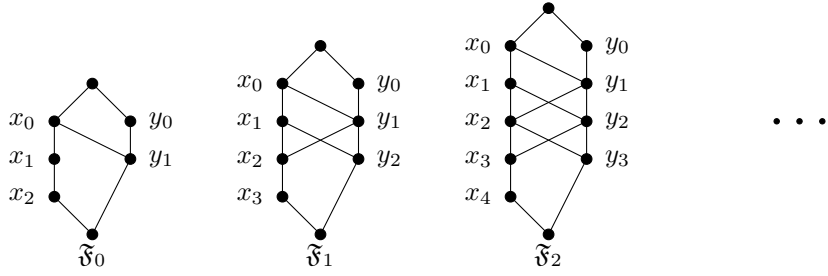
\begin{figure}[!]
\centering
\begin{tikzpicture}
\filldraw[black] (-3,0) circle (2pt) node[anchor=north] {$\mathfrak{F}_0$};
\filldraw[black] (-3.5,0.5) circle (2pt) node[anchor=east] {$x_2\,\,$};
\filldraw[black] (-3.5,1) circle (2pt) node[anchor=east] {$x_1\,\,$};
\filldraw[black] (-3.5,1.5) circle (2pt) node[anchor=east] {$x_0\,\,$};
\filldraw[black] (-3,2) circle (2pt) node[anchor=south] {};
\filldraw[black] (-2.5,1) circle (2pt) node[anchor=west] {$\,\,y_1$};
\filldraw[black] (-2.5,1.5) circle (2pt) node[anchor=west] {$\,\,y_0$};
\draw[black] (-3,0) -- (-3.5,0.5);
\draw[black] (-3.5,0.5) -- (-3.5,1);
\draw[black] (-3.5,1) -- (-3.5,1.5);
\draw[black] (-3.5,1.5) -- (-3,2);
\draw[black] (-3,0) -- (-2.5,1);
\draw[black] (-2.5,1) -- (-2.5,1.5);
\draw[black] (-2.5,1) -- (-3.5,1.5);
\draw[black] (-2.5,1.5) -- (-3,2);
\filldraw[black] (0,0) circle (2pt) node[anchor=north] {$\mathfrak{F}_1$};
\filldraw[black] (-0.5,0.5) circle (2pt) node[anchor=east] {$x_3\,\,$};
\filldraw[black] (-0.5,1) circle (2pt) node[anchor=east] {$x_2\,\,$};
\filldraw[black] (-0.5,1.5) circle (2pt) node[anchor=east] {$x_1\,\,$};
\filldraw[black] (-0.5,2) circle (2pt) node[anchor=east] {$x_0\,\,$};
\filldraw[black] (0,2.5) circle (2pt) node[anchor=south] {};
\filldraw[black] (0.5,1) circle (2pt) node[anchor=west] {$\,\,y_2$};
\filldraw[black] (0.5,1.5) circle (2pt) node[anchor=west] {$\,\,y_1$};
\filldraw[black] (0.5,2) circle (2pt) node[anchor=west] {$\,\,y_0$};
\draw[black] (0,0) -- (-0.5,0.5);
\draw[black] (-0.5,0.5) -- (-0.5,1);
\draw[black] (-0.5,1) -- (-0.5,1.5);
\draw[black] (-0.5,1.5) -- (-0.5,2);
\draw[black] (-0.5,2) -- (0,2.5);
\draw[black] (0,0) -- (0.5,1);
\draw[black] (0.5,1) -- (0.5,1.5);
\draw[black] (0.5,1.5) -- (0.5,2);
\draw[black] (0.5,2) -- (0,2.5);
\draw[black] (-0.5,1) -- (0.5,1.5);
\draw[black] (0.5,1) -- (-0.5,1.5);
\draw[black] (0.5,1.5) -- (-0.5,2);
\filldraw[black] (3,0) circle (2pt) node[anchor=north] {$\mathfrak{F}_2$};
\filldraw[black] (2.5,0.5) circle (2pt) node[anchor=east] {$x_4\,\,$};
\filldraw[black] (2.5,1) circle (2pt) node[anchor=east] {$x_3\,\,$};
\filldraw[black] (2.5,1.5) circle (2pt) node[anchor=east] {$x_2\,\,$};
\filldraw[black] (2.5,2) circle (2pt) node[anchor=east] {$x_1\,\,$};
\filldraw[black] (2.5,2.5) circle (2pt) node[anchor=east] {$x_0\,\,$};
\filldraw[black] (3,3) circle (2pt) node[anchor=south] {};
\filldraw[black] (3.5,1) circle (2pt) node[anchor=west] {$\,\,y_3$};
\filldraw[black] (3.5,1.5) circle (2pt) node[anchor=west] {$\,\,y_2$};
\filldraw[black] (3.5,2) circle (2pt) node[anchor=west] {$\,\,y_1$};
\filldraw[black] (3.5,2.5) circle (2pt) node[anchor=west] {$\,\,y_0$};
\draw[black] (3,0) -- (2.5,0.5);
\draw[black] (2.5,0.5) -- (2.5,1);
\draw[black] (2.5,1) -- (2.5,1.5);
\draw[black] (2.5,1.5) -- (2.5,2);
\draw[black] (2.5,2) -- (2.5,2.5);
\draw[black] (2.5,2.5) -- (3,3);
\draw[black] (3,0) -- (3.5,1);
\draw[black] (3.5,1) -- (3.5,1.5);
\draw[black] (3.5,1.5) -- (3.5,2);
\draw[black] (3.5,2) -- (3.5,2.5);
\draw[black] (3.5,2.5) -- (3,3);
\draw[black] (2.5,1) -- (3.5,1.5);
\draw[black] (3.5,1) -- (2.5,1.5);
\draw[black] (3.5,1.5) -- (2.5,2);
\draw[black] (2.5,1.5) -- (3.5,2);
\draw[black] (3.5,2) -- (2.5,2.5);
\filldraw[black] (6,1.5) circle (1pt) node[anchor=west] {};
\filldraw[black] (6.3,1.5) circle (1pt) node[anchor=west] {};
\filldraw[black] (6.6,1.5) circle (1pt) node[anchor=west] {};
\end{tikzpicture} 
\caption{The sequence ${\rm \Delta}$}
\label{fig:1}
\end{figure}

\begin{lemma}
The sequence ${\rm \Delta}$ forms a $\leq$-antichain.
\end{lemma}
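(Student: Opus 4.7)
The plan is to prove the lemma by contradiction along the lines of the sketch given just before the statement: I suppose an order-preserving surjection $f \colon \mathfrak{F}_n \twoheadrightarrow \mathfrak{F}_m$ exists with $n \neq m$ and derive a contradiction. Since $|\mathfrak{F}_k| = 2k+7$ is strictly increasing in $k$, the cardinality of the domain must dominate that of the codomain, so after possibly relabeling we may assume $n \geq m$; the task is to rule out $n > m$. As a first observation, any such $f$ must send the unique maximum (top) of $\mathfrak{F}_n$ to the top of $\mathfrak{F}_m$ and the unique minimum (root) to the root, because order-preserving surjections transport unique extrema to unique extrema.

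Next, I measure the depth of a point as the length of the longest chain from that point up to the top, so the top has depth $1$. Any order-preserving map sends a depth-$d$ node to a node of depth at most $d$, so the image under $f$ of the upset of $\mathfrak{F}_n$ consisting of points of depth $\leq m+1$ is contained in the upset of $\mathfrak{F}_m$ of depth $\leq m+1$. A counting argument combined with surjectivity promotes this inclusion to equality, and since both upsets have the same cardinality, the restriction of $f$ is a bijection between them. The rigidity of the zigzag structure---propagated downward from the asymmetry that $x_0$ has the two lower covers $x_1, y_1$ while $y_0$ has only the single lower cover $y_1$---forces this bijection to preserve labels, so that $f(x_i^{\mathfrak{F}_n}) = x_i^{\mathfrak{F}_m}$ and $f(y_i^{\mathfrak{F}_n}) = y_i^{\mathfrak{F}_m}$ for all $0 \leq i \leq m-1$.

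With this label-preserving bijection in place, I extend the argument one additional layer to conclude $f(x_{m+2}^{\mathfrak{F}_n}) = x_{m+2}^{\mathfrak{F}_m}$, so we may take $y \coloneqq x_{m+2}^{\mathfrak{F}_n}$. Because $n > m$, the node $x_{m+2}^{\mathfrak{F}_n}$ is a \emph{middle} node of the $x$-chain rather than its bottom, and the construction of $\mathfrak{F}_n$ therefore provides the cross-edge $x_{m+2}^{\mathfrak{F}_n} \leq y_{m+1}^{\mathfrak{F}_n}$. A parallel label-preserving argument at this layer identifies $f(y_{m+1}^{\mathfrak{F}_n}) = y_{m+1}^{\mathfrak{F}_m}$; setting $z \coloneqq y_{m+1}^{\mathfrak{F}_n}$ and applying order-preservation to $y \leq z$ yields $x_{m+2}^{\mathfrak{F}_m} = f(y) \leq f(z) = y_{m+1}^{\mathfrak{F}_m}$. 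But in $\mathfrak{F}_m$ the node $x_{m+2}$ is the \emph{bottom} of the $x$-chain, whose only upper cover is $x_{m+1}$; in particular $x_{m+2} \not\leq y_{m+1}$ in $\mathfrak{F}_m$, which is the desired contradiction.

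The hard part will be the second paragraph---rigorously establishing the label-preserving bijection on the upper cone---and its one-layer extension in the third paragraph. Once that structural rigidity is secured, the contradiction relies on the essential asymmetry between $\mathfrak{F}_n$ and $\mathfrak{F}_m$: in $\mathfrak{F}_n$ the node indexed $x_{m+2}$ is a middle node and hence inherits the cross-edge to $y_{m+1}$, whereas in $\mathfrak{F}_m$ the homonymous node is the bottom of its $x$-chain and lacks this edge.
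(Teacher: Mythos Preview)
Your proposal follows the paper's sketch in spirit: both locate the contradiction at the node $x_{m+2}$ of $\mathfrak{F}_m$, which lacks the cross-edge to $y_{m+1}$ that its namesake in $\mathfrak{F}_n$ (for $n>m$) possesses. The difference is in how you reach that node. The paper takes an \emph{arbitrary} preimage $y\in f^{-1}(x_{m+2}^{\mathfrak{F}_m})$ and argues that some $z\geq y$ must map to $y_{m+1}^{\mathfrak{F}_m}$; you instead aim for the stronger claim that $f$ is label-preserving on the top cone, so that $y=x_{m+2}^{\mathfrak{F}_n}$ and $z=y_{m+1}^{\mathfrak{F}_n}$ become the \emph{specific} witnesses. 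Your route is more constructive but also more demanding, and you rightly flag the rigidity step as the hard part.

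There is, however, a real indexing slip. With the top at depth $1$, the depth-$\leq(m+1)$ upset covers indices $0\le i\le m-1$, whereas $y_{m+1}$ lies at depth $m+3$ and $x_{m+2}$ at depth $m+4$; reaching them requires three further layers, not ``one additional layer''. Worse, the label-preserving \emph{bijection} cannot persist to depth $m+4$: there $\mathfrak{F}_m$ has the single node $x_{m+2}$ while $\mathfrak{F}_n$ has both $x_{m+2}$ and $y_{m+2}$, so the two upsets no longer have the same cardinality and your argument must change form at that last step. Separately, the phrase ``a counting argument combined with surjectivity'' hides a non-trivial point: global surjectivity of $f$ does not by itself force the restriction of $f$ to the depth-$\leq(m+1)$ upset of $\mathfrak{F}_n$ to hit every depth-$\leq(m+1)$ node of $\mathfrak{F}_m$; you need an inductive argument on depth (using that both posets have exactly two nodes per level in the relevant range) to get this. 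The paper's sketch is also terse here and defers to the cited source, but your label-preservation programme stacks additional unproved claims on top of that gap, so be aware that your route, while workable, leaves more to fill in than the paper's.
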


\noindent We call a map $f\colon \mathfrak{F} \to \mathfrak{G}$ between posets $\mathfrak{F} = \langle W, \leq \rangle$ and $\mathfrak{G} = \langle W', \leq' \rangle$ a p-morphism if $f$ is order-preserving, and $f(w) \leq' v$ implies the existence of $w' \in W$ such that $w \leq w'$ and $f(w') = v$. A partial p-morphism such that $\mathrm{dom}(f)$ is a downward closed set is called a \emph{positive morphism}. We write $\mathfrak{F}\preceq \mathfrak{G}$ if $\mathfrak{F}$ is a positive morphic image of $\mathfrak{G}$. Assume $\mathfrak{F}\preceq \mathfrak{G}$ and let $f$ be a positive morphism from 
$\mathfrak{G}$ onto $\mathfrak{F}$. Extending $f$ by mapping all the points of $\mathfrak{G} \setminus \mathrm{dom}(f)$ to the top node of $\mathfrak{F}$, we obtain a total order-preserving map, yielding $\mathfrak{F}\leq \mathfrak{G}$. This ensures the $\leq$-antichain ${\rm \Delta}$ to be also a $\preceq$-antichain.

\begin{lemma}
If $\mathfrak{F}\preceq \mathfrak{G}$ and $\mathfrak{F}$ is a finite rooted posets with a top node, then $\mathfrak{F}\leq \mathfrak{G}$.
\end{lemma}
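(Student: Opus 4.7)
The plan is to directly implement the construction sketched in the paragraph preceding the lemma: take the positive morphism $f \colon \mathfrak{G} \twoheadrightarrow \mathfrak{F}$ witnessing $\mathfrak{F} \preceq \mathfrak{G}$ and extend it to a total surjection $\bar{f} \colon \mathfrak{G} \twoheadrightarrow \mathfrak{F}$ by sending every point of $\mathfrak{G}$ that lies outside $D \coloneqq \mathrm{dom}(f)$ to the top node $t$ of $\mathfrak{F}$, which exists by hypothesis. Since $f$ is already onto and $\bar{f}$ merely adds new preimages for the single element $t$, surjectivity of $\bar{f}$ is immediate, so the whole content of the lemma reduces to verifying that $\bar{f}$ is order-preserving.

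For order-preservation I would run a short case analysis on $x \leq' y$ in $\mathfrak{G}$, keyed to membership in $D$. If $x, y \in D$, then $\bar{f}(x) = f(x) \leq f(y) = \bar{f}(y)$ by the (partial) p-morphism property of $f$. If $y \notin D$, then $\bar{f}(y) = t$ is the top of $\mathfrak{F}$, so $\bar{f}(x) \leq \bar{f}(y)$ is automatic regardless of whether $x$ lies in $D$. The remaining configuration is $x \notin D$ and $y \in D$, and this is precisely the configuration that the downward-closedness of $D$ rules out: from $y \in D$ and $x \leq' y$ we would get $x \in D$, contradicting $x \notin D$.

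The step I would flag as the only substantive one is the last case, which is also where one sees both hypotheses being genuinely used: the top node of $\mathfrak{F}$ provides a uniform image for the points outside $D$, and the downward-closedness of $D$ guarantees that no comparability $x \leq' y$ in $\mathfrak{G}$ crosses from outside $D$ into $D$ in the wrong direction. No real obstacle arises; the lemma is essentially the observation that these two features together allow one to convert a positive morphism into an order-preserving surjection, so no further structure from $f$ (back condition, etc.) needs to be preserved by the extension.
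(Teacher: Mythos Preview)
Your proposal is correct and follows exactly the construction the paper sketches in the paragraph preceding the lemma: extend the positive morphism by sending every point outside $\mathrm{dom}(f)$ to the top node of $\mathfrak{F}$, and observe that the result is a total order-preserving surjection. You simply fill in the case analysis that the paper leaves implicit, and your identification of the key point---that downward-closedness of $\mathrm{dom}(f)$ rules out the problematic case $x \notin D$, $y \in D$, $x \leq' y$---is spot on.
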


\noindent Having constructed the desired $\preceq$-antichain, we now proceed by adjusting the technique of Jankov-de Jongh formulas. 
Recall that the universal model $\mathcal{U}(n)$ can be roughly thought of as the upper generated submodel of the descriptive model dual to the free Heyting algebra on $n \in \mathbb{N}$ generators. The universal model $(\mathcal{U}^{\star}(n))^{+}$ for positive logic is isomorphic to a generated submodel of $\mathcal{U}(n)$, and is a top model~\cite{bezhanishvili2006lattices,bezhanishviliuniversal}. It was shown in~\cite{bezhanishviliuniversal} that every finite rooted poset with a top node, equipped with an appropriate valuation, is isomorphic to a generated submodel of $(\mathcal{U}^{\star}(m))^{+}$ for some $m$. Note that, for the frames $\mathfrak{F}_n$ in Figure~\ref{fig:1}, it suffices to take $m=3$. Hence, we call $w_n\in(\mathcal{U}^{\star}(3))^{+}$ the node in the positive universal model corresponding to the root of $\mathfrak{F}_n$. We can assume without loss of generality the positive Jankov-de Jongh formula of $\mathfrak{F}_n$ to be defined as follows~\cite{bezhanishviliuniversal}: 
\[
\chi^{\star}(\mathfrak{F}_n) = \psi^{\star}_{w_n}\coloneqq \varphi^{\star}_{w_n} \to \bigvee_{i=1}^{r}\varphi^{\star}_{w_{n_{i}}}\, ,
\]
\noindent where $\varphi^{\star}_{w_n}, \varphi^{\star}_{w_{n_{i}}}$ are defined as in~\cite{bezhanishviliuniversal} and $w_n \prec \{w_{n_{1}},\ldots,w_{n_{r}}\}$. So, a rooted poset $\mathfrak{G}$ refutes $\chi^{\star}(\mathfrak{F}_n)$ if and only if $\mathfrak{F}_n\preceq\mathfrak{G}$. As ${\rm \Delta}$ is a $\preceq$-antichain, this means that, for every $n,m \in \mathbb{N}$, the formula $\chi^{\star}(\mathfrak{F}_m)$ is valid on the poset $\mathfrak{F}_n$ if and only if $n \ne m$. In fact, the construction of $\varphi^{\star}_{w_n}, \varphi^{\star}_{w_{n_{i}}}$ ensures that the formula $\varphi^{\star}_{w_n}$ is satisfied at the root $w_n$ in $(\mathcal{U}^{\star}(2))^{+}$, while none of the formulas $\varphi^{\star}_{w_{n_{i}}}$ is.  

\begin{lemma}
For $n,m \in \mathbb{N}$, the formula $\mathfrak{F}_n \models \chi^{\star}(\mathfrak{F}_m)$ if and only if $n \ne m$.
\end{lemma}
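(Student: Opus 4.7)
The proof is essentially immediate from the two key facts already established in the paragraph immediately preceding the statement: (a) the sequence ${\rm \Delta}$ is a $\preceq$-antichain (which follows by combining the two previous lemmas, since for finite rooted posets with a top node $\mathfrak{F}\preceq\mathfrak{G}$ implies $\mathfrak{F}\leq\mathfrak{G}$, and ${\rm \Delta}$ is already a $\leq$-antichain); and (b) the characterization of the positive Jankov--de~Jongh formula from~\cite{bezhanishviliuniversal}: a rooted poset $\mathfrak{G}$ refutes $\chi^{\star}(\mathfrak{F}_n)$ if and only if $\mathfrak{F}_n\preceq\mathfrak{G}$.

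For the direction $n\neq m \Longrightarrow \mathfrak{F}_n\models\chi^{\star}(\mathfrak{F}_m)$, I would argue by contraposition using (b). If $\mathfrak{F}_n$ refuted $\chi^{\star}(\mathfrak{F}_m)$, then $\mathfrak{F}_m\preceq\mathfrak{F}_n$, contradicting the $\preceq$-antichain property (a) whenever $n\neq m$. Hence no such refuting valuation exists, and $\chi^{\star}(\mathfrak{F}_m)$ is valid on $\mathfrak{F}_n$.

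For the direction $n=m\Longrightarrow \mathfrak{F}_n\not\models\chi^{\star}(\mathfrak{F}_n)$, I would simply observe that the identity map $\mathfrak{F}_n\to\mathfrak{F}_n$ is a total p-morphism whose domain (all of $\mathfrak{F}_n$) is trivially downward closed; this is a positive morphism, so $\mathfrak{F}_n\preceq\mathfrak{F}_n$ and (b) yields a refuting valuation. Concretely, the hint in the paragraph before the lemma makes this explicit: inside $(\mathcal{U}^{\star}(3))^{+}$, the node $w_n$ corresponding to the root of $\mathfrak{F}_n$ satisfies $\varphi^{\star}_{w_n}$ but none of the $\varphi^{\star}_{w_{n_i}}$, so the implication $\varphi^{\star}_{w_n}\to\bigvee_{i=1}^r\varphi^{\star}_{w_{n_i}}$ fails at $w_n$, and pulling this valuation back along the isomorphism between $\mathfrak{F}_n$ and the corresponding generated submodel gives a refuting valuation on $\mathfrak{F}_n$ itself.

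The main obstacle here is minor and is essentially bookkeeping: one has to be sure that the characterization (b), which in the positive setting from~\cite{bezhanishviliuniversal} is stated for rooted finite posets with a top node and the associated top models, applies in exactly the form used above. Since each $\mathfrak{F}_n$ in ${\rm \Delta}$ is by construction a finite rooted poset with a top node, and the refuting valuation of interest is the pullback from the positive universal model, there is nothing further to verify. No genuinely new computation is required beyond recalling the construction of $\varphi^{\star}_{w_n}$ and noting that the identity counts as a positive morphism.
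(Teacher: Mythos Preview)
Your proposal is correct and follows essentially the same approach as the paper: the lemma is stated without a separate proof, and the justification is exactly the paragraph preceding it, namely the Jankov--de~Jongh characterization (a rooted poset $\mathfrak{G}$ refutes $\chi^{\star}(\mathfrak{F}_n)$ iff $\mathfrak{F}_n\preceq\mathfrak{G}$) combined with the fact that ${\rm \Delta}$ is a $\preceq$-antichain. Your observation that the identity is a positive morphism to handle the case $n=m$ just makes explicit the reflexivity of $\preceq$, which the paper takes for granted; the concrete refutation via $\varphi^{\star}_{w_n}$ you describe is also exactly what the paper records.
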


\noindent At this point, we are going to equip each rooted poset $\mathfrak{F}_n$ with an appropriate function ${\rm N}_n$ to make it a top descriptive frame. Moreover, we enhance the definition of $\chi^{\star}(\mathfrak{F}_n)$ in order to obtain formulas $\theta(\mathfrak{F}_n)$ with the same defining property of the Jankov-de Jongh formulas for the signature of positive logic, but with the extra addition that the formulas $\theta(\mathfrak{F}_n)$ are theorems of 
{\sf MPC}. Given the rooted poset $\mathfrak{F}_n$, we consider the top descriptive frame $\langle \mathfrak{F}_n, {\rm N}_n, t_n \rangle$ where ${\rm N}_n$ has the property ${\rm N}_n(\{t_n\})=\{t_n\}$, the element $t_n$ being the top node of $\mathfrak{F}_n$. We denote the new family of top descriptive frames $\langle \mathfrak{F}_n, {\rm N}_n, t_n \rangle$ by ${\rm \Delta}_{N}$. Now we consider a fresh propositional variable $p$, and define: 
\[
\theta(\mathfrak{F}_n) \coloneqq (p \to\neg p) \land \varphi^{\star}_{w_n} \to \neg p \lor \bigvee_{i=1}^{r}\varphi^{\star}_{w_{n_{i}}}\, .
\]
\noindent 
We note that the formulas $\theta(\mathfrak{F}_n)$ are not the Jankov-de Jongh formulas for the considered signature. It is nonetheless easy to see that, if $n \ne m$, the formula $\theta(\mathfrak{F}_n)$ is valid on the frame $\langle \mathfrak{F}_m, {\rm N}_m, t_m \rangle$. On the other hand, for checking that $\langle \mathfrak{F}_n, {\rm N}_n, t_n \rangle \not\vDash \theta(\mathfrak{F}_n)$ it is enough to consider a valuation $\tilde{V}_n$ extending $V_n$ by  $\tilde{V}_n(p)=\{t_n\}$. This way, the root of $\mathfrak{F}_n$ under the considered valuation makes $(p \to\neg p) \land \varphi^{\star}_{w_n}$ true, while neither $\neg p$ nor $\bigvee_{i=1}^{r}\varphi^{\star}_{w_{n_{i}}}$ is true at $w_n$.

As a consequence, we obtain the following result.

\begin{theorem}\label{t:continuum}
There are continuum many N-logics. 
\end{theorem}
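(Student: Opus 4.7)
The plan is to carry out the standard Jankov-style cardinality argument using the formulas $\theta(\mathfrak{F}_n)$ already constructed. For each subset $S\subseteq\mathbb{N}$, I would define the N-logic
\[
{\sf L}_S \coloneqq {\sf N} + \{\theta(\mathfrak{F}_n) \mid n \notin S\},
\]
and show that the assignment $S \mapsto {\sf L}_S$ is injective. Since the language is countable there are at most $2^{\aleph_0}$ N-logics, so injectivity of this map forces the total to be exactly continuum.

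To prove injectivity, I would take two distinct subsets $S,S'\subseteq \mathbb{N}$ and, without loss of generality, pick $n \in S\setminus S'$. Then $n\notin S'$ gives $\theta(\mathfrak{F}_n)\in {\sf L}_{S'}$ directly by construction. The work is to show $\theta(\mathfrak{F}_n) \notin {\sf L}_S$, and for this I would exhibit the top descriptive frame $\langle \mathfrak{F}_n, {\rm N}_n, t_n\rangle \in {\rm \Delta}_N$ as a separating model. By the completeness of N-logics with respect to their class of rooted top descriptive frames (recorded at the end of Section~\ref{s:prelim}), it suffices to verify that this frame validates every axiom of ${\sf L}_S$ while refuting $\theta(\mathfrak{F}_n)$.

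The validation step splits into two pieces. First, $\langle \mathfrak{F}_n, {\rm N}_n, t_n\rangle$ is an N-frame, so it validates all theorems of ${\sf N}$. Second, for any $m\notin S$ we have $m\neq n$ (because $n\in S$), and the previous paragraph of the excerpt established that $\mathfrak{F}_n$ (with its chosen ${\rm N}_n$) validates $\theta(\mathfrak{F}_m)$ whenever $n\neq m$. Hence every extra axiom of ${\sf L}_S$ is valid on this frame. The refutation step is already given: the valuation $\tilde V_n$ with $\tilde V_n(p)=\{t_n\}$ falsifies $\theta(\mathfrak{F}_n)$ at $w_n$. Therefore $\theta(\mathfrak{F}_n)\notin {\sf L}_S$ but $\theta(\mathfrak{F}_n)\in {\sf L}_{S'}$, so ${\sf L}_S \neq {\sf L}_{S'}$.

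I do not foresee a serious obstacle, since all the heavy lifting has been done: the $\preceq$-antichain property of ${\rm \Delta}$, the design of the formulas $\theta(\mathfrak{F}_n)$, the verification that $\mathfrak{F}_n \models \theta(\mathfrak{F}_m)$ iff $n\neq m$, and the completeness theorem with respect to finitely generated rooted top descriptive frames. The only point that requires a moment of care is noting that ${\rm N}_n$ on $\mathfrak{F}_n$ can indeed be chosen so as to satisfy the N-frame locality condition~(\ref{eq:locality}) while still having ${\rm N}_n(\{t_n\})=\{t_n\}$; this is automatic because every finite poset with a top becomes a top descriptive frame once the admissible upsets and a compatible ${\rm N}$ are fixed, as remarked in Section~\ref{s:prelim}.
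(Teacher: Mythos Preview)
Your proposal is correct and follows essentially the same Jankov-style argument as the paper: define one N-logic per subset of the antichain by adjoining the corresponding $\theta$-formulas to {\sf N}, and separate distinct subsets using the frame $\langle \mathfrak{F}_n,{\rm N}_n,t_n\rangle$ together with the already established fact that $\mathfrak{F}_n\models\theta(\mathfrak{F}_m)$ iff $n\neq m$. The only cosmetic difference is your complement indexing ($n\notin S$) versus the paper's direct indexing ($\mathfrak{F}\in\Gamma$), and you additionally make explicit the upper bound $2^{\aleph_0}$ coming from countability of the language.
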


\begin{proof}
The fact that each $\theta(\mathfrak{F}_n)$ is a theorem of {\sf MPC} ensures that for each subset ${\rm \Gamma}\subseteq {\rm \Delta}_N$, the logic  
\[
L({\rm \Gamma}) \coloneqq \mathsf{N} + \{\theta(\mathfrak{F})\mid \mathfrak{F} \in {\rm \Gamma}\}
\] 
belongs to the interval $[{\sf N}, {\sf MPC}]$. Finally, observe that for each pair of different subsets ${\rm \Gamma}_1 \ne {\rm \Gamma}_2$ of ${\rm \Delta}_N$, we have $L({\rm \Gamma}_1) \ne L({\rm \Gamma}_2)$. Indeed, without loss of generality we may assume that there is $\mathfrak{F} \in {\rm \Gamma}_1$ such that  $\mathfrak{F} \notin {\rm \Gamma}_2$. Moreover, we have $\mathfrak{F} \not\vDash \theta(\mathfrak{F})$ and $\mathfrak{F} \vDash \theta(\mathfrak{G})$, for each $\mathfrak{G}$ in ${\rm \Gamma}_2$. Therefore, there is a top descriptive frame $\mathfrak{F}$ which is an $L({\rm \Gamma}_2)$-frame and not an  $L({\rm \Gamma}_1)$-frame. Since every N-logic is complete with respect to top descriptive frames, the latter entails that $L({\rm \Gamma}_1) \ne L({\rm \Gamma}_2)$.   
\end{proof}

Observe that the choices of ${\rm N}_n$ and of the formula $(p \to \neg p) \to \neg p$ that were made above are arbitrary among the ones that ensure a proof of Theorem~\ref{t:continuum}. This suggests the question what would have happened if we had made different choices. As a matter of fact, the result of Theorem~\ref{t:continuum} can be refined as follows. 

\begin{proposition}\label{t:continuumnarrow}\
\begin{newlist}
\item[\rm (1)] There are continuum many N-logics between {\sf NeF} and {\sf CoPC}. \smallskip
\item[\rm (2)] There are continuum many N-logics below {\sf NeF}. \smallskip
\item[\rm (3)] There are continuum many N-logics above {\sf CoPC}.
\end{newlist}
\end{proposition}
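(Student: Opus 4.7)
The plan is to run the proof of Theorem~\ref{t:continuum} three times, keeping the positive $\preceq$-antichain $\{\mathfrak{F}_n\}_{n\in\mathbb{N}}$ unchanged but, for each of (1)--(3), equipping the posets with different negation maps ${\rm N}_n$ and using a variant of $\theta(\mathfrak{F}_n)$ tailored to the interval of interest. Fixing bounds $(L_0,L_1)$ equal to $({\sf NeF},{\sf CoPC})$, $({\sf N},{\sf NeF})$, and $({\sf CoPC},{\sf MPC})$ respectively, I seek $({\rm N}_n,\theta(\mathfrak{F}_n))$ satisfying: (a) each $\langle\mathfrak{F}_n,{\rm N}_n,t_n\rangle$ validates $L_0$; (b) each $\theta(\mathfrak{F}_n)$ is a theorem of $L_1$; (c) $\theta(\mathfrak{F}_n)$ is refuted at the root $w_n$ of $\mathfrak{F}_n$ under some extension of the canonical positive valuation; and (d) $\theta(\mathfrak{F}_n)$ is valid on $\mathfrak{F}_m$ for every $m\ne n$. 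Once these are in place, defining $L({\rm \Gamma}) \coloneqq L_0 + \{\theta(\mathfrak{F}) : \mathfrak{F}\in{\rm \Gamma}\}$ for ${\rm \Gamma}\subseteq\{\langle\mathfrak{F}_n,{\rm N}_n,t_n\rangle\}_{n\in\mathbb{N}}$ yields continuum many pairwise distinct logics in $[L_0,L_1]$ by the same argument as Theorem~\ref{t:continuum}. Clause~(d) is a free byproduct of the construction, since each $\theta(\mathfrak{F}_n)$ I consider has $\varphi^\star_{w_n}$ as a conjunct in the antecedent and $\bigvee_{i=1}^{r}\varphi^\star_{w_{n_i}}$ as a disjunct in the consequent, and is therefore implied on $\mathfrak{F}_m$ by the validity of the positive Jankov--de Jongh formula $\chi^\star(\mathfrak{F}_n)$, which holds on every $\mathfrak{F}_m$ with $m\ne n$ by the antichain property.

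For (1) I would take $\theta(\mathfrak{F}_n) \coloneqq \bigl((p\to q)\land\varphi^\star_{w_n}\bigr)\to\bigl((\neg q\to\neg p)\lor\bigvee_{i=1}^{r}\varphi^\star_{w_{n_i}}\bigr)$, which is a ${\sf CoPC}$-theorem via contraposition, and equip $\mathfrak{F}_n$ with the map ${\rm N}_n(\{t_n\})\coloneqq W$ and ${\rm N}_n(X)\coloneqq\{t_n\}$ for every other upset $X$. A direct case analysis, leveraging the fact that every nonempty upset of $\mathfrak{F}_n$ contains the top element $t_n$, shows that this ${\rm N}_n$ satisfies locality~(\ref{eq:locality}) and the ${\sf NeF}$ inclusion $X\cap{\rm N}(X)\subseteq{\rm N}(Y)$ while breaking antitonicity between $\emptyset$ and $\{t_n\}$. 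The refuting valuation is $V(p)\coloneqq\emptyset$, $V(q)\coloneqq\{t_n\}$: the antecedent holds vacuously at $w_n$, while $\neg q$ holds everywhere and $\neg p$ holds only at $t_n$, so $\neg q\to\neg p$ fails at $w_n$. For (2), I would use $\theta(\mathfrak{F}_n)\coloneqq\bigl((p\land\neg p)\land\varphi^\star_{w_n}\bigr)\to\bigl(\neg q\lor\bigvee_{i=1}^{r}\varphi^\star_{w_{n_i}}\bigr)$, a ${\sf NeF}$-theorem via negative \emph{ex falso}, together with the identity negation ${\rm N}_n(X)\coloneqq X$, which satisfies locality trivially and fails ${\sf NeF}$ as soon as $\mathfrak{F}_n$ has more than one point; the refuting valuation is $V(p)\coloneqq W$, $V(q)\coloneqq\{t_n\}$. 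For (3), I would reuse the original formula and valuation from Theorem~\ref{t:continuum} and equip $\mathfrak{F}_n$ with the constant negation ${\rm N}_n(X)\coloneqq\{t_n\}$, which is vacuously antitone and hence yields a ${\sf CoPC}$-frame, yet falsifies the ${\sf MPC}$ axiom $(p\to\neg p)\to\neg p$ at $w_n$ by taking $X=\{t_n\}$.

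The main obstacle is the design step (a)+(c): on each $\mathfrak{F}_n$ one must produce a negation ${\rm N}_n$ that is simultaneously local, validates $L_0$, and still admits a refuting valuation for the chosen $\theta(\mathfrak{F}_n)$. Locality~(\ref{eq:locality}) is the most demanding of these constraints---most ``naive'' piecewise definitions of ${\rm N}_n$ violate it---so the three candidate maps above will have to be verified by hand, systematically exploiting the presence of the top element $t_n$ in each $\mathfrak{F}_n$ to keep the case distinctions tractable.
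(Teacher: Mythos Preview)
Your scaffolding and parts~(2) and~(3) are correct; (3) coincides with the paper's choice, and for~(2) your identity map ${\rm N}_n(X)=X$ is a legitimate, slightly simpler alternative to the paper's ${\rm N}^2_n$ (which sends $W_n$ to $W_n$ and every proper upset to $W_n\setminus\{w_n\}$). The gap is in~(1): your map ${\rm N}_n(\{t_n\})=W_n$, ${\rm N}_n(X)=\{t_n\}$ otherwise, does \emph{not} satisfy locality~(\ref{eq:locality}). In any $\mathfrak{F}_n$ pick the incomparable points $x_0,y_0$ just below $t_n$ and set $X=R(x_0)$; then $X\cap R(y_0)=\{t_n\}$, so $y_0\in{\rm N}_n(\{t_n\})=W_n$, yet $y_0\notin{\rm N}_n(X)=\{t_n\}$. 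Thus $\langle\mathfrak{F}_n,{\rm N}_n\rangle$ is not an N-frame at all, and your case analysis cannot go through.

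There is also a structural reason your design for~(1) was bound to fail. You break antitonicity only at the pair $\emptyset\subseteq\{t_n\}$ and refute with $V(p)=\emptyset$; but in the top-descriptive framework used in Theorem~\ref{t:continuum} the empty set is not admissible and every variable must be true at $t_n$. Once $\emptyset$ is removed, $\{t_n\}$ is the \emph{least} admissible upset, and sending it to $W_n$ while sending everything larger to $\{t_n\}$ is antitone---hence a {\sf CoPC}-frame on which no {\sf CoPC}-theorem, in particular your $\theta(\mathfrak{F}_n)$, can be refuted. The paper sidesteps both issues by putting the ``exceptional'' value at a large upset rather than a small one: ${\rm N}^1_n(W_n\setminus\{w_n\})=W_n$ and ${\rm N}^1_n(X)=W_n\setminus\{w_n\}$ otherwise. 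This is local (for $w\ne w_n$ both sides of the locality equivalence hold because $W_n\setminus\{w_n\}\subseteq{\rm N}^1_n(Y)$ for every $Y$), validates {\sf NeF}, and fails antitonicity at $\{t_n\}\subseteq W_n\setminus\{w_n\}$; the refuting valuation is $V(p)=\{t_n\}$, $V(q)=W_n\setminus\{w_n\}$.
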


\begin{proof}
The idea is to slightly modify the previous proof by choosing different theorems of minimal propositional logic, and different functions ${\rm N}_n$. Changing the former allows us to refine the result from above, while changing the latter from below. 

To prove (1), we consider posets $\mathfrak{F}_n$, but this time we consider the function ${\rm N}^1_n$ defined by
\[
{\rm N}^1_n(X) \coloneqq 
\begin{cases}
W_n \setminus \{w_n\}, & \text{ if }X \ne W_n \setminus \{w_n\} \\
W_n, & \text{ if }X = W_n \setminus \{w_n\}.
\end{cases}
\]
Let $p, q$ be fresh propositional variables. Consider the formulas
\[
\theta^1(\mathfrak{F}_n)\coloneqq (p \to q) \land \varphi^{\star}_{w_n} \to (\neg q \to \neg p) \lor \bigvee_{i=1}^{r}\varphi^{\star}_{w_{n_{i}}}\,.
\]
\noindent 
Observe that the frame $\langle \mathfrak{F}_n, {\rm N}^1_n, t_n \rangle$ is a frame of the logic $\mathsf{NeF}$. In fact, if we consider an arbitrary $w \in W_n$ such that $w \in X \cap {\rm N}^1_n(X)$ (this amounts to every $w \in W_n \setminus \{w_n\}$), then $w \in {\rm N}^1_n(Y)$ for every admissible upset $Y$. Moreover, it is easy to see that, if $n \ne m$, the formula $\theta^1(\mathfrak{F}_n)$ is valid on the frame $\langle \mathfrak{F}_m, {\rm N}^1_m, t_m \rangle$. On the other hand, for checking that $\langle \mathfrak{F}_n, {\rm N}^1_n, t_n \rangle \not\vDash \theta^1(\mathfrak{F}_n)$ it is enough to consider a valuation $\tilde{V}_n$ extending 
$V_n$ by $\tilde{V}_n(p)=\{t_n\}$ and $\tilde{V}_n(q)=W_n \setminus \{w_n\}$. This way, the root of $\mathfrak{F}_n$ under the considered valuation makes the whole antecedent of $\theta^1(\mathfrak{F}_n)$ true (since $\tilde{V}_n(p) \subseteq \tilde{V}_n(q)$), while the consequent is not true at $w_n$ (since $w_n \in {\rm N}^1_n(\tilde{V}_n(q))$, while $w_n \not \in {\rm N}^1_n(\tilde{V}_n(p))$). 

A proof of (2) is obtained by considering functions:
\[
{\rm N}^2_n(X) \coloneqq 
\begin{cases}
W_n \setminus \{w_n\}, & \text{ if }X \subset W, \\
W_n, & \text{ if }X = W,
\end{cases}
\]
and formulas 
\[
\theta^2(\mathfrak{F}_n)\coloneqq (p \land \neg p) \land \varphi^{\star}_{w_n} \to \neg q \lor \bigvee_{i=1}^{r}\varphi^{\star}_{w_{n_{i}}},
\] 
with fresh propositional variables $p, q$. 

For (3), it is enough to define ${\rm N}^3_n(X) \coloneqq \{t_n\}$, for every upset $X$, and to use the formulas $\theta(\mathfrak{F}_n)$.  
\end{proof}


\section{Filtration}\label{s:filtration}

The method of filtration is among the oldest and most used techniques for proving the finite model property for modal and superintuitionistic logics. If a model refutes a formula $\varphi$, then the idea is to filter it out (identify the points that agree on the subformulas of $\varphi$) to obtain a finite model that refutes $\varphi$. Thus the new model is a quotient of the original one. The method of filtration was originally developed algebraically~\cite{McK41,McKT44}, and later model-theoretically~\cite{Seg,Lem77}, see also~\cite{CZ97}. In this section, we complete the results and constructions from~\cite{Colacito:Thesis:2016}, where the finite model property was proved via model-theoretic filtrations applied to the canonical models only. A general model-theoretic definition of filtration was missing in~\cite{Colacito:Thesis:2016}. We give one here, which is a combination of the standard filtration for intuitionistic logic \cite{CZ97} 
and the one for non-normal logic \cite{chellas1980modal,pacuit2017neighborhood}. Building on the ideas of~\cite{bezhanishvilialgebraic,bezhanishvili2017locally}, we define algebraic filtrations for N-algebras. As in the modal and intuitionistic cases, we show, similarly to \cite{bezhanishvilialgebraic},  that the algebraic and model-theoretic methods turn out to be ``two sides of the same coin''.

Let ${\rm \Sigma}$ be a finite set of formulas closed under subformulas. Given a model $\mathfrak{M}$, we define an equivalence relation $\sim$ on $W$ by considering, for $w, v \in W$: 

\[
w\sim v\text{ if }(\forall\varphi\in{\rm \Sigma})(w\in V(\varphi)\Longleftrightarrow v\in V(\varphi)).
\]

\noindent Consider $\mathfrak{F}^{*} = \langle W^{*}, \leq^{*}, {\rm N}^{*} \rangle$, where $W^{*} = W/\sim$, and $\leq^{*}$ and ${\rm N}^{*}$ are, respectively, a partial order on $W^{*}$ and a function ${\rm N}^{*} \colon \mathcal{U}(W^{*}, \leq^{*}) \to \mathcal{U}(W^{*}, \leq^{*})$ satisfying the following conditions for all $w,v\in W$, $X\in\mathcal{U}(W^{*}, \leq^{*})$ and $\varphi, \neg\psi \in {\rm \Sigma}$: 

\begin{newlist}

\item [\rm (a)] $w\leq v\text{ implies }[w]\leq ^{*}[v]$; \smallskip

\item [\rm (b)] $[w]\leq ^{*}[v]\text{ and }w\in V(\varphi)\text{ imply }v\in V(\varphi)$; \smallskip

\item [\rm (c)] $[w] \in {\rm N}^{*}(X)\text{ implies }w \in {\rm N}(\pi^{-1}(X))$, 
\end{newlist}
where $\pi^{-1}(X) \coloneqq \{w\in W \colon [w]\in X\}$; and
\begin{newlist}

\item [\rm (d)] $w \in {\rm N}(V(\psi))\text{ implies }[w] \in {\rm N}^{*}(\pi[V(\psi)])$,

\end{newlist}
where $\pi[V(\psi)]\coloneqq\{[w] \in W^{*} \colon w \in V(\psi)\}$. Finally, let $V^{*}$ be a valuation on $\mathfrak{F}^{*}$ such that 
\[
V^{*}(p)\coloneqq\{[w] \in W^{*} \colon w\in V(p)\} = \pi[V(p)],
\]

\noindent for each $p\in{\rm \Sigma}$. We call $\mathfrak{M}^{*} = \langle \mathfrak{F}^{*},V^{*} \rangle$ a \emph{filtration} of the model $\mathfrak{M}$ through ${\rm \Sigma}$. 

\begin{theorem}[Filtration Theorem]
Let $\mathfrak{M}=\langle \mathfrak{F},V \rangle$ be a model. Consider the filtration $\mathfrak{M}^{*} = \langle \mathfrak{F}^{*},V^{*} \rangle$ of $\mathfrak{M}$ through a finite set ${\rm \Sigma}$ of formulas closed under subformulas. Then, for each $\varphi\in{\rm \Sigma}$ and $w\in W$ we have 
\[
w\in V(\varphi)\text{ iff }[w]\in V^{*}(\varphi).
\]
\end{theorem}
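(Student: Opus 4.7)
The plan is to prove the biconditional by induction on the complexity of $\varphi \in {\rm \Sigma}$, crucially using that ${\rm \Sigma}$ is closed under subformulas so that every subformula encountered stays within the inductive hypothesis. For the atomic base case $\varphi = p$, the direction $w \in V(p) \Rightarrow [w] \in V^*(p)$ is immediate from the definition $V^*(p) = \pi[V(p)]$. For the converse, I would use that $p \in {\rm \Sigma}$: if $[w] = [v]$ with $v \in V(p)$, the relation $w \sim v$ forces $w \in V(p)$ by the very definition of $\sim$. The case $\varphi = \top$ is trivial.

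For the propositional connectives, conjunction and disjunction follow routinely from the IH applied to the immediate subformulas. The implication case $\varphi = \psi \to \chi$ is the standard intuitionistic filtration argument. In the forward direction, given $[v] \geq^* [w]$ with $[v] \in V^*(\psi)$, the IH yields $v \in V(\psi)$; condition (b), applied to $\psi \to \chi \in {\rm \Sigma}$, persists $w \in V(\psi \to \chi)$ upward to $v$, so that $v \in V(\chi)$ and hence $[v] \in V^*(\chi)$ by IH. In the converse direction, an original-world successor $v \geq w$ with $v \in V(\psi)$ is translated to $[w] \leq^* [v]$ by condition (a), and the IH closes the argument.

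The only genuinely new case is $\varphi = \neg\psi$, where conditions (c) and (d) pair up neatly with the two directions. By the IH applied to $\psi \in {\rm \Sigma}$, both $V^*(\psi) = \pi[V(\psi)]$ and $\pi^{-1}(V^*(\psi)) = V(\psi)$ hold. For the forward direction, $w \in {\rm N}(V(\psi))$ together with $\neg\psi \in {\rm \Sigma}$ triggers (d) to give $[w] \in {\rm N}^*(\pi[V(\psi)]) = {\rm N}^*(V^*(\psi))$, i.e., $[w] \in V^*(\neg\psi)$. For the converse, $[w] \in {\rm N}^*(V^*(\psi))$ feeds into (c) with $X = V^*(\psi)$ to yield $w \in {\rm N}(\pi^{-1}(V^*(\psi))) = {\rm N}(V(\psi))$, i.e., $w \in V(\neg\psi)$.

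The one subtlety I expect to be the main (albeit modest) obstacle is ensuring that $V^*(\psi)$ is an upset of $(W^*, \leq^*)$ before invoking ${\rm N}^*$ on it. This follows from condition (b) together with the IH: if $[v] \in V^*(\psi)$ and $[v] \leq^* [v']$, then $v \in V(\psi)$ by IH, condition (b) (with $\psi \in {\rm \Sigma}$) lifts this to $v' \in V(\psi)$, and the IH puts $[v']$ back into $V^*(\psi)$. The asymmetry between (c), quantified over all admissible upsets, and (d), required only on the specific argument $\pi[V(\psi)]$ for $\neg\psi \in {\rm \Sigma}$, is exactly what the two halves of the negation case demand, confirming that the filtration conditions have been tuned precisely for this induction to go through.
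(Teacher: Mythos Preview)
Your proposal is correct and follows essentially the same inductive strategy as the paper, which only spells out the negation case and handles it exactly as you do via conditions (c) and (d) together with the induction hypothesis identifying $V^{*}(\psi)$ with $\pi[V(\psi)]$ and $V(\psi)$ with $\pi^{-1}(V^{*}(\psi))$. Your treatment is in fact more complete, since you also work through the base case, the implication case using (a) and (b), and the upset check for $V^{*}(\psi)$, all of which the paper leaves implicit.
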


\begin{proof}
We focus on the case $\neg\varphi\in{\rm \Sigma}$. Consider $w\in V(\neg\varphi)$, i.e., $w\in {\rm N}(V(\varphi))$. By induction hypothesis, $V^{*}(\varphi)=\{[w] \colon w \in V(\varphi)\} = \pi[V(\varphi)]$. Hence, by (d), we are able to conclude $[w]\in {\rm N}^{*}(V^{*}(\varphi))$. On the other hand, assume $w\not\in {\rm N}(V(\varphi))$. By induction hypothesis, $V(\varphi)=\{w \colon [w] \in V'(\varphi)\} = \pi^{-1}(V^{*}(\varphi))$. Therefore, by (c), $[w]\not\in {\rm N}^{*}(V^{*}(\varphi))$.  
\end{proof}

Among the filtrations of $\mathfrak{M} = \langle W, \leq , {\rm N}, V \rangle$ through ${\rm \Sigma}$, there always exists a greatest filtration. Consider the partial order on $W^{*}$ defined by:
\begin{equation}\label{eq:biggest}
[w]\leq ^{g}[v]\text{ if } (\forall\varphi\in{\rm \Sigma})(w\in V(\varphi)\Rightarrow v\in V(\varphi)),
\end{equation}
and the map ${\rm N}^{g}(X) \coloneqq \{[w] \in W^{*} \colon w \in {\rm N}(\pi^{-1}(X))\}$. It is easy to check that $\mathfrak{F}^{\,g} = \langle W^{*}, \leq ^{g}, {\rm N}^{g} \rangle$ is a frame and $\mathfrak{M}^{\,g} = \langle \mathfrak{F}^{\,g}, V^{*} \rangle$ is a filtration of $\mathfrak{M}$. Consider now an arbitrary filtration $\mathfrak{M}^{*} = \langle \mathfrak{F}^{*},V^{*} \rangle$. Suppose $[w]\leq ^{*}[v]$. Then $w \in V(\varphi)$ implies $v \in V(\varphi)$. Moreover, observe that every upset of $\langle W^{*}, \leq ^{g} \rangle$ is an upset of $\langle W^{*}, \leq ^{*} \rangle$. Furthermore, for a 
given $X \in \mathcal{U}(W^{*}, \leq ^g)$ such that $[w] \in {\rm N}^{*}(X)$ we have  then $[w] \in {\rm N}^g(X)$. To see this, assume that $[w] \in {\rm N}^{*}(X)$. Then, by (c), $w \in {\rm N}(\pi^{-1}(X))$ and hence, $[w] \in {\rm N}^{g}(X)$.


The rest of this section will be devoted to an algebraic development of the notion of filtration. Let ${\rm \Sigma}$ be a finite set of formulas closed under subformulas, and consider an N-algebra with a valuation $\langle \mathbf{A}, \mu \rangle$, where $\mu \colon {\sf Prop} \to \mathbf{A}$. Since ${\rm \Sigma}$ is finite, so is $\mu[{\rm \Sigma}]$ as a subset of $\mathbf{A}$. Let $\mathbf{S}$ be the $(\land,\lor,1)$-reduct of $\mathbf{A}$ generated by $\mu[{\rm \Sigma}]$. Observe that $(\land,\lor,1)$-reducts are locally finite, i.e., every finitely generated $(\land,\lor,1)$-structure is finite. Hence, the resulting $\mathbf{S}$ is finite and we can make $\mathbf{S}$ an N-algebra by equipping it with operations $\to_{\mathbf{S}}$ and $\neg_{\mathbf{S}}$ defined as follows: 
\begin{equation}\label{eq:impl}
a\to_{\mathbf{S}} b\coloneqq\bigvee\{s\in S\mid a\land s\leq b\},
\end{equation}
and 
\begin{equation}\label{eq:neg}
\neg_{\mathbf{S}} a\coloneqq\bigvee\{s\in S\mid s\leq \neg a\},
\end{equation}
for every $a,b\in S$. Note that $\neg_{\mathbf{S}} a\leq \neg a$, and $\neg_{\mathbf{S}} a=\neg a$ whenever $\neg a\in \mu[{\rm \Sigma}]$, and $a \to_{\mathbf{S}} b\leq a \to b$ and $a \to_{\mathbf{S}} b = a \to b$ whenever $a\to b\in \mu[{\rm \Sigma}]$. We call $V$ the valuation $V=\alpha\circ \mu$ on $\mathfrak{F}_{\mathbf{A}}$, where $\alpha$ is defined as in Section~\ref{s:prelim}.

\begin{lemma}\label{l:filtrationalg}
The subset $S \subseteq A$ gives rise to a filtration $\mathfrak{M}^{*}_{\mathbf{A}}=\langle \mathfrak{F}_{\mathbf{A}},V^{*} \rangle$ of the model $\mathfrak{M}_{\mathbf{A}}=\langle \mathfrak{F}_{\mathbf{A}},V \rangle$ through ${\rm \Sigma}$.
\end{lemma}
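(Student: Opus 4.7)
My plan is to realise the quotient $W_{\mathbf{A}}/{\sim}$ as the dual frame $\mathfrak{F}_{\mathbf{S}}$ of the finite N-algebra $\mathbf{S}$ constructed just above the lemma, and to pass between the two sides via the restriction map $\pi\colon W_{\mathbf{A}}\to W_{\mathbf{S}}$ defined by $\pi(w)\coloneqq w\cap S$. The proof then splits naturally into identifying the underlying frames and verifying the four clauses (a)--(d) of the filtration definition.

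First I would check that $\pi$ descends to a bijection between $W_{\mathbf{A}}/{\sim}$ and $W_{\mathbf{S}}$. Well\nbd{-}definedness is routine: $w\cap S$ inherits from $w$ closure under $\land$, $\lor$\nbd{-}primeness, and $1$\nbd{-}inclusion. The identity $V(\varphi)=\widehat{\mu(\varphi)}$ for every $\varphi\in{\rm \Sigma}$ follows by induction on connectives, using the definition of ${\rm N}_{\mathbf{A}}$ for the negation clause, so $w\sim v$ is equivalent to $w$ and $v$ agreeing on $\mu[{\rm \Sigma}]$. Since $\mu[{\rm \Sigma}]$ generates $\mathbf{S}$ as a $(\land,\lor,1)$-structure and prime filters respect these operations, this in turn is equivalent to $w\cap S=v\cap S$. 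Surjectivity of $\pi$ comes from the standard prime filter extension applied to the $\mathbf{A}$-filter generated by a given $u\in W_{\mathbf{S}}$ and the $\lor$-closed set $S\setminus u$, which are disjoint because the $\mathbf{S}$-order is the restriction of the $\mathbf{A}$-order.

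Conditions (a) and (b) transfer easily along $\pi$: $w\subseteq v$ forces $w\cap S\subseteq v\cap S$, and under that order any $\mu(\varphi)\in w\cap S\subseteq v\cap S$ forces $v\in V(\varphi)$. The prescribed $V^{*}(p)\coloneqq\pi[V(p)]$ then coincides with the tautological valuation induced by $\mu$ on $\mathbf{S}$. The crucial step is (c) and (d), for which I would take ${\rm N}^{*}\coloneqq{\rm N}_{\mathbf{S}}$. Since $\mathbf{S}$ is finite, every admissible upset of $\mathfrak{F}_{\mathbf{S}}$ has the form $\widehat{a}$ for some $a\in S$, with $\pi$-preimage again $\widehat{a}\subseteq W_{\mathbf{A}}$. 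Condition (c) then becomes the implication $\neg_{\mathbf{S}} a\in w\cap S\Rightarrow\neg a\in w$, which is immediate from the inequality $\neg_{\mathbf{S}} a\leq\neg a$ built into~(\ref{eq:neg}); condition (d) becomes $\neg\mu(\psi)\in w\Rightarrow\neg_{\mathbf{S}}\mu(\psi)\in w\cap S$ when $\neg\psi\in{\rm \Sigma}$, which uses the complementary equality $\neg_{\mathbf{S}}\mu(\psi)=\neg\mu(\psi)$ that is valid precisely on $\mu[{\rm \Sigma}]$. The main obstacle is little more than bookkeeping: the real content already sits in the two facts $\neg_{\mathbf{S}}\leq\neg$ in general and $\neg_{\mathbf{S}}=\neg$ on the image of ${\rm \Sigma}$, and one only has to check that these match the two directions asked for by (c) and (d) respectively.
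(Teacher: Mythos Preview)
Your approach is correct but differs from the paper's in a notable way. The paper does \emph{not} identify $W_{\mathbf{A}}/{\sim}$ with the dual frame $\mathfrak{F}_{\mathbf{S}}$, nor does it use $\neg_{\mathbf{S}}$ at all in the proof. Instead it stays on the model-theoretic side: after showing (as you do) that $w\sim v$ iff $w\cap S=v\cap S$, it equips the quotient with the order $[w]\leq^{*}[v]\iff w\cap S\subseteq v\cap S$ and with
\[
{\rm N}^{*}(X)\coloneqq\{[w]:w\in{\rm N}(\pi^{-1}(X))\},
\]
which is exactly the greatest-filtration operator ${\rm N}^{g}$ described just before the lemma. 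Conditions (a)--(d) are then immediate from the definition. Your route instead takes ${\rm N}^{*}={\rm N}_{\mathbf{S}}$ and verifies (c) and (d) via the two algebraic facts $\neg_{\mathbf{S}}a\leq\neg a$ and $\neg_{\mathbf{S}}\mu(\psi)=\neg\mu(\psi)$ when $\neg\psi\in{\rm \Sigma}$. These two choices of ${\rm N}^{*}$ are genuinely different filtrations (yours is in general strictly smaller), and your argument has the merit of making the duality between the algebraic filtration $(\mathbf{S},\mu_{\mathbf{S}})$ and its frame-theoretic counterpart explicit---which is precisely the perspective the paper develops \emph{after} the lemma. The paper's version, by contrast, is quicker and dovetails with the greatest-filtration discussion that precedes it.

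Two small points to tighten in your write-up. First, condition (c) must hold for \emph{all} upsets $X$, not only admissible ones; since $\mathbf{S}$ is finite the only upset not of the form $\widehat{a}$ is $\emptyset$, and ${\rm N}_{\mathbf{S}}(\emptyset)=\emptyset$ because the improper filter $S$ lies in every $\widehat{a}$, so (c) is vacuous there---but this deserves a sentence. Second, for surjectivity of $\pi$ the set $S\setminus u$ is only an ideal of $\mathbf{S}$, not of $\mathbf{A}$; you should pass to the $\mathbf{A}$-ideal it generates (which, by $\lor$-closure of $S\setminus u$, is just its downward closure) before invoking the prime filter theorem.
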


\begin{proof}
Define $\sim$ on $W_{\mathbf{A}}$ by $w \sim v$ if and only if $S \cap w = S \cap v$. The first step amounts to proving that $w \sim v$ is equivalent to $w \cap \mu[{\rm \Sigma}] = v \cap \mu[{\rm \Sigma}]$. The proof follows the one of~\cite[Lemma 2.5]{bezhanishvilialgebraic}. Observe that this first step is clearly equivalent to $w \sim v$ if and only if $(\forall \varphi \in {\rm \Sigma})(\mu(\varphi) \in w \text{ if and only if } \mu(\varphi) \in v)$, which in turn is equivalent to $(\forall \varphi \in {\rm \Sigma})(w \in V(\varphi) \text{ if and only if } v \in V(\varphi))$. 

Now, let $W^{*}_{\mathbf{A}} = W_{\mathbf{A}} / \sim$ and define $[w] \leq ^{*} [v] \Longleftrightarrow w \cap S \subseteq v \cap S$, and 
\[
{\rm N}^{*}(X) \coloneqq \{[w] \in W^{*}_{\mathbf{A}} \colon w \in {\rm N}(\pi^{-1}(X))\},
\] 
for $X \in \mathcal{U}(W^{*}_{\mathbf{A}}, \leq ^{*})$. It is straightforward to see that $\mathfrak{F}^{*}_{\mathbf{A}} = \langle W^{*}_{\mathbf{A}}, \leq ^{*}, {\rm N}^{*} \rangle$ is a frame. Let now $V^{*}$ be a valuation on $\mathfrak{F}^{*}_{\mathbf{A}}$ such that: 
\[
V^{*}(p) \coloneqq \{[w] \in W^{*}_{\mathbf{A}} \colon w \in V(p)\}.
\]
The structure $\langle \mathfrak{F}^{*}_{\mathbf{A}}, V^{*} \rangle$ is a model, and it satisfies conditions (a)--(d).
\end{proof}

The construction presented in Lemma~\ref{l:filtrationalg} can be generalized as follows. Let $\langle \mathbf{A}, \mu \rangle$ be an N-algebra with a valuation and ${\rm \Sigma}$ be a finite set of formulas closed under subformulas. Suppose that $L$ is the universe of a finite sublattice of $\mathbf{A}$ with unit $1_{\mathbf{A}}$ such that $\mu[{\rm \Sigma}] \subseteq L$. Consider the N-algebra $\mathbf{L} = \langle L, \land, \lor, \to_{\mathbf{L}}, \neg_{\mathbf{L}}, 1_{\mathbf{A}} \rangle$, where $\to_{\mathbf{L}}$ and $\neg_{\mathbf{L}}$ are defined as in~(\ref{eq:impl}) and~(\ref{eq:neg}). Let $\mu_{\mathbf{L}}$ be a valuation on $\mathbf{L}$ such that $\mu_{\mathbf{L}}(p) = \mu(p)$ for each variable $p \in {\rm \Sigma}$. We call the pair $(\mathbf{L}, \mu_{\mathbf{L}})$ a \emph{filtration} of $(\mathbf{A}, \mu)$ through ${\rm \Sigma}$.

\begin{theorem}[Filtration Theorem]
If $(\mathbf{L}, \mu_{\mathbf{L}})$ is a filtration of $(\mathbf{A}, \mu)$ through ${\rm \Sigma}$, then $\mu_{\mathbf{L}}(\varphi) = \mu(\varphi)$ for each $\varphi \in {\rm \Sigma}$.
\end{theorem}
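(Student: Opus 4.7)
The plan is to prove the equality by induction on the structure of $\varphi \in {\rm \Sigma}$, using the closure of ${\rm \Sigma}$ under subformulas so that the inductive hypothesis is available for every immediate subformula. The base case $\varphi = p$ is immediate from the stipulation $\mu_{\mathbf{L}}(p) = \mu(p)$ in the definition of filtration, and the case $\varphi = \top$ follows from the fact that $\mathbf{L}$ is constructed with the same top element $1_{\mathbf{A}}$ as $\mathbf{A}$.

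For the lattice connectives, since $L$ is (the universe of) a sublattice of $\mathbf{A}$, the meet and join on $\mathbf{L}$ are inherited from $\mathbf{A}$; combining this with the induction hypothesis gives $\mu_{\mathbf{L}}(\psi \star \chi) = \mu_{\mathbf{L}}(\psi) \star \mu_{\mathbf{L}}(\chi) = \mu(\psi) \star \mu(\chi) = \mu(\psi \star \chi)$ for $\star \in \{\land, \lor\}$. The step that does the actual work concerns the non-lattice connectives. If $\varphi = \psi \to \chi \in {\rm \Sigma}$, then by the induction hypothesis $\mu_{\mathbf{L}}(\varphi) = \mu(\psi) \to_{\mathbf{L}} \mu(\chi)$; since $\psi \to \chi \in {\rm \Sigma}$, we have $\mu(\psi) \to \mu(\chi) = \mu(\varphi) \in \mu[{\rm \Sigma}] \subseteq L$, and the observation recorded immediately after equation~(\ref{eq:impl}) then forces $\mu(\psi) \to_{\mathbf{L}} \mu(\chi) = \mu(\psi) \to \mu(\chi) = \mu(\varphi)$. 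The case $\varphi = \neg \psi$ is handled analogously using equation~(\ref{eq:neg}) together with $\mu(\neg \psi) = \neg \mu(\psi) \in L$.

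I do not expect any serious obstacle here, since the definitions of $\to_{\mathbf{L}}$ and $\neg_{\mathbf{L}}$ are tailored precisely so that they agree with the original $\to$ and $\neg$ on elements of $\mu[{\rm \Sigma}]$, even while they may differ elsewhere in $L$. The inductive argument only ever evaluates these derived operations on subformulas of $\varphi$ that themselves lie in ${\rm \Sigma}$, so the ``modified'' values of $\to_{\mathbf{L}}$ and $\neg_{\mathbf{L}}$ outside $\mu[{\rm \Sigma}]$ are never invoked. This is the algebraic analogue of the way conditions~(c) and~(d) suffice in the model-theoretic Filtration Theorem: correctness is required only on formulas of ${\rm \Sigma}$, and the construction is engineered to deliver exactly that.
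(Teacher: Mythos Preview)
Your proposal is correct and follows essentially the same approach as the paper: an induction on the structure of $\varphi \in {\rm \Sigma}$, with the key step being that whenever $a \to b$ (resp.\ $\neg a$) already lies in $L$, the derived operation $\to_{\mathbf{L}}$ (resp.\ $\neg_{\mathbf{L}}$) agrees with the original one. The paper only spells out the negation case explicitly, while you give all cases; otherwise the arguments coincide.
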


\begin{proof}
The proof goes again by induction, and we focus on the case $\neg \varphi \in {\rm \Sigma}$. But then, $\neg \mu (\varphi) = \mu(\neg \varphi) \in \mu[{\rm \Sigma}] \subseteq L$. Therefore, $\neg \mu(\varphi) = \neg_{\mathbf{L}} \mu (\varphi)$. Thus, $\mu_{\mathbf{L}}(\neg \varphi) = \neg \mu_{\mathbf{L}}(\varphi) = \neg \mu(\varphi) = \mu(\neg \varphi)$.  
\end{proof}

\noindent Among the filtrations $(\mathbf{L}, \mu_{\mathbf{L}})$ of $(\mathbf{A}, \mu)$, the filtration $(\mathbf{S}, \mu_{\mathbf{S}})$ is the least one. The following shows that $(\mathbf{S}, \mu_{\mathbf{S}})$ corresponds to the greatest filtration $\mathfrak{M}_{\mathbf{A}}^{g}$ of $\mathfrak{M}_{\mathbf{A}}$ through ${\rm \Sigma}$. Consider $[w], [v] \in W_{\mathbf{A}}^{*}$ such that $[w] \leq^{g} [v]$. But then, by~(\ref{eq:biggest}), this means that $(\forall \varphi \in {\rm \Sigma})(w \in V(\varphi) \Rightarrow v \in V(\varphi))$. This is again equivalent to $w \cap S \subseteq v \cap S$. Hence, we conclude $[w] \leq^{*} [v]$. 


\section{Modal Companions}\label{s:modal}

A normal extension $\mathsf{M}$ of $\mathsf{S4}$ is a \emph{modal companion} of an intermediate logic {\sf L} if for each propositional formula $\varphi$ we have $\mathsf{L}\vdash\varphi$ iff $\mathsf{M}\vdash \varphi^\Box$, where 
$\varphi^\Box$ is the G\"odel translation of $\varphi$. In this section we define (bi-)modal companions of subminimal logics. The negation operator in {\sf N} and its extensions behave like a non-normal modal operator which suggests that bi-modal companions of subminimal logics must have a non-normal modal operator. For the theory of non-normal modal logics and neighbourhood semantics we refer the reader to~\cite{chellas1980modal,hansen2003monotonic,pacuit2017neighborhood}.

Consider the bi-modal language $\mathcal{L}_{\square}({\sf Prop})$, where ${\sf Prop}$ is a countable set of propositional variables, generated by the following grammar: 
\[
p \mid \bot\mid \top \mid \varphi \land \varphi \mid \varphi \lor \varphi \mid \varphi \to \varphi \mid \square\varphi \mid \blacksquare \varphi
\]  
\noindent 
where $p \in {\sf Prop}$. We write $\neg \varphi$ to denote the implication $\varphi \to \bot$.  
Recall that the axioms and rules for the modal logic ${\sf S4}$ are:
\[
\begin{array}{rl}
{\rm (K)} & \square (p \to q) \to (\square p \to \square q);\\[.1in]
{\rm (T)} & \square p \to p;\\[.1in]
{\rm (4)} & \square p \to \square\square p,\\[.1in]
\end{array}
\]

\noindent in addition to all the classical tautologies, and the rules of \emph{modus ponens}, uniform substitution, and necessitation ($p / {\square p}$). 
Now, consider the following additional bi-modal axioms: 
\begin{equation}\label{eq:modaln}
\square (p \leftrightarrow q) \to (\blacksquare p \leftrightarrow \blacksquare q),
\end{equation}
\begin{equation}\label{eq:modal2}
\blacksquare p \to \square \blacksquare p.
\end{equation}

\noindent Note that the $\blacksquare$ modality is not a normal modality, since we do not have the rule of necessitation for it. As for the notation, we denote the new modality as a \emph{box} modality since it behaves as a universal modality from the point of view of neighbourhood semantics. We denote this bi-modal system by ${\sf NS4}$. 

A neighbourhood frame for $\emph{\sf NS4}$ is a triple $\mathfrak{F} = \langle W, \leq , \mathfrak{n} \rangle$, where $\langle W, \leq \rangle$ is a set equipped with a reflexive transitive relation $\leq$, and $\mathfrak{n}$ a monotone function $\mathfrak{n} \colon W \to \mathcal{P}(\mathcal{P}(W))$ (i.e.,\ if $w\leq v$ then $\mathfrak{n}(w)\subseteq \mathfrak{n}(v)$) such that: 
\begin{equation}\label{eq:localm}
X \in \mathfrak{n}(w) \Longleftrightarrow X \cap R(w) \in \mathfrak{n}(w).
\end{equation}
Using the fact that a neighbourhood function $\mathfrak{n}$ induces the existence of a function ${\rm N} \colon \mathcal{P}(W) \to \mathcal{U}(W,\leq)$ via the equivalence $X \in \mathfrak{n}(w)$ if and only if $w\in {\rm N}(X)$, we consider the following generalization of N-semantics.

An \emph{N-frame for} \textsf{NS4} is a triple $\mathfrak{F} = \langle W, \leq , {\rm N} \rangle$, where $\langle W, \leq \rangle$ is a set equipped with a reflexive transitive relation, and ${\rm N} \colon \mathcal{P}(W) \to \,\mathcal{U}(W,\leq)$ is a map such that:
\begin{equation}\label{eq:localm}
w \in {\rm N}(X) \Longleftrightarrow w\in {\rm N}(X \cap R(w) ).
\end{equation}
\noindent
An \emph{N-model for} {\sf NS4} is a pair $\mathfrak{M} = \langle \mathfrak{F}, V \rangle$, where $\mathfrak{F}$ is an N-frame and $V$ is a valuation $V\colon {\sf Prop} \to \mathcal{P}(W)$. Truth of a formula is defined as follows:

\[
\begin{array}{lcl}
\mathfrak{M}, w \models p & \Longleftrightarrow & w \in V(p);\\[.1in]
\mathfrak{M}, w \models \varphi \land \psi & \Longleftrightarrow & \mathfrak{M}, w \models \varphi \text{ and }\mathfrak{M}, w \models \psi;\\[.1in]
\mathfrak{M}, w \models \varphi \lor \psi & \Longleftrightarrow & \mathfrak{M}, w \models \varphi \text{ or }\mathfrak{M}, w \models \psi;\\[.1in]
\mathfrak{M}, w \models \varphi \to \psi  & \Longleftrightarrow & \mathfrak{M}, w \models \varphi \text{ implies }\mathfrak{M}, w \models \psi;\\[.1in]
\mathfrak{M}, w \models \square \varphi & \Longleftrightarrow & R(w) \subseteq V(\varphi);\\[.1in]
\mathfrak{M}, w \models \blacksquare \varphi & \Longleftrightarrow & w \in {\rm N}(V(\varphi)),\\[.1in]
\end{array}
\]
where $V(\varphi) \coloneqq \{w \in W \colon \mathfrak{M}, w \models \varphi\}$. A formula $\varphi$ is said to be valid in a model $\mathfrak{M}$ if $\mathfrak{M}, w \models \varphi$ for every $w \in W$, and $\varphi$ is valid in a frame $\mathfrak{F}$ if every model on $\mathfrak{F}$ validates $\varphi$. We let $\mathcal{F}$ denote the class of all N-frames for {\sf NS4}. We will now show that  ${\sf NS4}$ is sound with respect to $\mathcal{F}$.

\begin{theorem}[Soundness]
The system ${\sf NS4}$ is sound with respect to $\mathcal{F}$.
\end{theorem}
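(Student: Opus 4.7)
The plan is the standard soundness argument: verify that every axiom is valid on every frame in $\mathcal{F}$, and that every inference rule preserves validity. First I would observe that for any N-model $\mathfrak{M}=\langle W,\leq,{\rm N},V\rangle$ in $\mathcal{F}$ and any formula $\varphi$, the set $V(\square\varphi)$ is an upset of $\langle W,\leq\rangle$: if $w\in V(\square\varphi)$ and $w\leq v$, then by transitivity $R(v)\subseteq R(w)\subseteq V(\varphi)$, so $v\in V(\square\varphi)$. The set $V(\blacksquare\varphi)={\rm N}(V(\varphi))$ is an upset by the very codomain condition imposed on ${\rm N}$. These two remarks are what make the modal axioms behave correctly.

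Validity of the classical tautologies is immediate, and validity of \textrm{(K)}, \textrm{(T)}, \textrm{(4)} follows from the usual Kripke-style argument for ${\sf S4}$, using reflexivity of $\leq$ for \textrm{(T)} and transitivity for \textrm{(4)}. For axiom~(\ref{eq:modal2}), suppose $\mathfrak{M},w\models\blacksquare p$, i.e. $w\in{\rm N}(V(p))$. Since ${\rm N}(V(p))$ is an upset, $R(w)\subseteq{\rm N}(V(p))=V(\blacksquare p)$, hence $\mathfrak{M},w\models\square\blacksquare p$.

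The only mildly interesting case is axiom~(\ref{eq:modaln}), and this is where the locality condition (\ref{eq:localm}) does the work. Assume $\mathfrak{M},w\models\square(p\leftrightarrow q)$; unpacking, this says $V(p)\cap R(w)=V(q)\cap R(w)$. By (\ref{eq:localm}) applied twice,
\[
w\in{\rm N}(V(p))\iff w\in{\rm N}(V(p)\cap R(w))\iff w\in{\rm N}(V(q)\cap R(w))\iff w\in{\rm N}(V(q)),
\]
which is precisely $\mathfrak{M},w\models\blacksquare p\leftrightarrow\blacksquare q$. So (\ref{eq:modaln}) is valid on every frame in $\mathcal{F}$.

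Finally, validity is preserved by modus ponens and uniform substitution for the usual reasons, and by necessitation for $\square$: if $\varphi$ is valid on every frame in $\mathcal{F}$ then in particular every successor of every world in every model satisfies $\varphi$, so $\square\varphi$ is valid as well. The main (and essentially only) obstacle is the correct use of the locality condition for axiom~(\ref{eq:modaln}); every other clause is a routine unwinding of the truth definition.
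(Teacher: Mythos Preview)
Your proposal is correct and follows essentially the same approach as the paper. The paper's proof is terser---it only checks axioms~(\ref{eq:modaln}) and~(\ref{eq:modal2}) explicitly, appealing to the upset property of ${\rm N}(X)$ for~(\ref{eq:modal2}) and to the locality condition~(\ref{eq:localm}) for~(\ref{eq:modaln}) exactly as you do---while you additionally spell out the routine cases (the ${\sf S4}$ axioms, the rules, and the observation that $V(\square\varphi)$ is an upset).
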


\begin{proof}
We only check the cases for the axioms~(\ref{eq:modaln}) and~(\ref{eq:modal2}), by proving that they are valid on each frame. 
The validity of~(\ref{eq:modal2}) follows immediately from the fact that ${\rm N}(X)$ is upward closed for every $X \in \mathcal{P}(W)$. For~(\ref{eq:modaln}), suppose that $\langle \mathfrak{F}, V \rangle, w \models \square(p \leftrightarrow q)$. This means that $R(w) \subseteq V(p \leftrightarrow q)$, which is in turn equivalent to $V(p) \cap R(w) = V(q) \cap R(w)$. In order to conclude $\langle \mathfrak{F}, V \rangle, w \models \blacksquare p \leftrightarrow \blacksquare q$, we assume without loss of generality that $w \in {\rm N}(V(p))$. By~(\ref{eq:localm}), this means that $w \in {\rm N}(V(p)\cap R(w))$. But then, $w \in {\rm N}(V(q) \cap R(w))$, since $V(p) \cap R(w) = V(q) \cap R(w)$. Thus, again by~(\ref{eq:localm}), we conclude $w \in {\rm N}(V(q))$. 
\end{proof}

We now focus on showing that {\sf NS4} is complete with respect to $\mathcal{F}$. To do this, we follow the standard approach for normal modal logics (see, e.g.,~\cite{CZ97,blackburn2002modal}), and combine it with the standard approach for neighbourhood semantics~\cite{pacuit2017neighborhood}. Recall that a set ${\rm \Gamma}$ of formulas is said to be maximally consistent if it is consistent (in the sense that it does not contain both $\varphi$ and $\neg \varphi$, for any $\varphi$) and, for every formula $\varphi$, either $\varphi \in {\rm \Gamma}$ or $\neg \varphi \in {\rm \Gamma}$. Every consistent set of formulas can be extended to a maximally consistent set of formulas (Lindenbaum's Lemma). As a consequence, it can be proved that every formula that is contained in all maximally consistent sets of formulas is a theorem. Consider now the set $\mathcal{W}$ of all maximally consistent sets of {\sf NS4} formulas. We denote by $| \varphi |$ the set of all maximally consistent sets of formulas containing $\varphi$. We define the canonical relation $\leq $ by: ${\rm \Gamma} \leq {\rm \Delta}$ if, whenever $\square \varphi \in {\rm \Gamma}$, then $\varphi \in {\rm \Delta}$. A map $\mathcal{N} \colon \mathcal{P}(\mathcal{W}) \to \mathcal{U}(\mathcal{W},\leq)$ is a canonical N-function provided that for all $\varphi \in \mathcal{L}_{\square}({\sf Prop})$, 
\[
 {\rm \Gamma} \in \mathcal{N}(|\varphi|) \Longleftrightarrow \blacksquare \varphi \in {\rm \Gamma}.
\]
We consider the canonical valuation $\mathcal{V}\colon {\rm Prop} \to \mathcal{P}(\mathcal{W})$ defined by 
\[
\mathcal{V}(p) = |p| \coloneqq \{{\rm \Gamma} \in \mathcal{W} \colon p \in {\rm \Gamma}\}.
\] 

\noindent The quadruple $\langle \mathcal{W}, \leq , \mathcal{N}, \mathcal{V} \rangle$ is called a canonical model. To prove completeness of ${\sf NS4}$, we use the following function
\[
\mathcal{N}^{loc}(X) \coloneqq \{{\rm \Gamma} \in \mathcal{W} \, | \, (\exists \blacksquare \psi \in {\rm \Gamma})(X \cap R({\rm \Gamma}) = | \psi | \cap R({\rm \Gamma}))\}.
\]
which makes $\mathcal{M} = \langle \mathcal{W},\leq , \mathcal{N}^{loc}, \mathcal{V} \rangle$ into a canonical model, and its underlying frame into an {\sf NS4}-frame. First of all, it is easy to see that $\mathcal{N}^{loc}$ satisfies~(\ref{eq:localm}). Further, the set $\mathcal{N}^{loc}(X)$ is upward closed for every $X \in \mathcal{P}(\mathcal{W})$. In fact, if $\blacksquare \psi \in {\rm \Gamma}$, then also $\square\blacksquare \psi \in {\rm \Gamma}$.  
But then, ${\rm \Gamma}\leq {\rm \Delta}$ implies $\blacksquare \psi \in {\rm \Delta}$. Since $X \cap R({\rm \Delta}) = |\psi| \cap R({\rm \Delta})$, we conclude. Moreover, $\mathcal{M}$ is canonical. In fact, ${\rm \Gamma}\in \mathcal{N}^{loc}(|\varphi|)$ implies that $|\psi| \cap R({\rm \Gamma}) = | \varphi | \cap R({\rm \Gamma})$ for some $\blacksquare \psi \in {\rm \Gamma}$, which means that for every successor ${\rm \Delta}$ of ${\rm \Gamma}$, $\varphi \leftrightarrow \psi \in {\rm \Delta}$ and, therefore, $\square (\varphi \leftrightarrow \psi) \in {\rm \Gamma}$. By the axiom~(\ref{eq:modaln}) and the fact that $\blacksquare \psi \in {\rm \Gamma}$, we conclude $\blacksquare \varphi \in {\rm \Gamma}$. A canonical function $\mathcal{N}$ is what ensures that membership and truth coincide in the canonical model (Truth Lemma); the cases for the other formulas proceed by the standard induction. 

\begin{theorem}[Completeness]
The system ${\sf NS4}$ is complete with respect to $\mathcal{F}$.
\end{theorem}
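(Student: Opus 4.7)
The strategy is the standard Henkin-style completeness argument for modal logics, adapted to the mixed normal/non-normal setting: show that the canonical N-model $\mathcal{M} = \langle \mathcal{W}, \leq, \mathcal{N}^{loc}, \mathcal{V} \rangle$ already constructed in the paragraph preceding the theorem is a model in which truth coincides with membership, and then read completeness off the usual way via Lindenbaum's Lemma. The hard work has already been carried out above: it has been verified that $\mathcal{N}^{loc}$ takes values in upsets, satisfies the locality condition~(\ref{eq:localm}), and is a \emph{canonical} N-function in the sense that ${\rm \Gamma} \in \mathcal{N}^{loc}(|\varphi|)$ if and only if $\blacksquare\varphi \in {\rm \Gamma}$. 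Thus the underlying frame of $\mathcal{M}$ lies in $\mathcal{F}$, and what remains is the Truth Lemma.

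First I would prove the Truth Lemma: for every formula $\varphi$ and every ${\rm \Gamma} \in \mathcal{W}$,
\[
\mathcal{M}, {\rm \Gamma} \models \varphi \iff \varphi \in {\rm \Gamma},
\]
by induction on the complexity of $\varphi$. The base case and the Boolean cases are routine from the definition of $\mathcal{V}$ and the properties of maximally consistent sets (in particular, $\bot \notin {\rm \Gamma}$, and ${\rm \Gamma}$ is closed under \emph{modus ponens} and contains all classical tautologies). The $\square$-case uses the standard {\sf S4} argument: if $\square\varphi \in {\rm \Gamma}$ and ${\rm \Gamma} \leq {\rm \Delta}$, then by definition of the canonical relation $\varphi \in {\rm \Delta}$, so $R({\rm \Gamma}) \subseteq |\varphi|$; conversely, if $\square\varphi \notin {\rm \Gamma}$ one shows that the set $\{\psi : \square\psi \in {\rm \Gamma}\} \cup \{\neg\varphi\}$ is consistent (using axioms (K), (T), (4) and necessitation), and extends it to a maximally consistent ${\rm \Delta}$ with ${\rm \Gamma} \leq {\rm \Delta}$ and $\varphi \notin {\rm \Delta}$. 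For the $\blacksquare$-case, the inductive hypothesis gives $\mathcal{V}(\varphi) = |\varphi|$, and then
\[
\mathcal{M}, {\rm \Gamma} \models \blacksquare\varphi \iff {\rm \Gamma} \in \mathcal{N}^{loc}(\mathcal{V}(\varphi)) \iff {\rm \Gamma} \in \mathcal{N}^{loc}(|\varphi|) \iff \blacksquare\varphi \in {\rm \Gamma},
\]
where the last equivalence is the canonicality of $\mathcal{N}^{loc}$ already established in the text.

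Completeness then follows in the standard fashion. Suppose ${\sf NS4} \not\vdash \varphi$. Then $\{\neg\varphi\}$ is consistent, so by Lindenbaum's Lemma it extends to some ${\rm \Gamma} \in \mathcal{W}$. By the Truth Lemma, $\mathcal{M}, {\rm \Gamma} \models \neg\varphi$, so $\varphi$ fails at ${\rm \Gamma}$ in a model whose underlying frame belongs to $\mathcal{F}$. Hence $\varphi$ is not valid on $\mathcal{F}$, proving the contrapositive of completeness.

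The only genuinely new ingredient beyond the textbook {\sf S4} proof is the treatment of $\blacksquare$, and this is precisely where the careful choice of $\mathcal{N}^{loc}$ (rather than the naive $\mathcal{N}(X) = \{{\rm \Gamma} : (\exists \psi)(X = |\psi| \text{ and } \blacksquare\psi \in {\rm \Gamma})\}$) pays off: without the local clause $X \cap R({\rm \Gamma}) = |\psi| \cap R({\rm \Gamma})$ one would not be able to simultaneously secure the upset property of $\mathcal{N}(X)$ (which needs axiom~(\ref{eq:modal2})) and canonicality (which needs axiom~(\ref{eq:modaln})). I expect this interplay---already handled in the discussion preceding the theorem---to be the main conceptual obstacle; once it is in place, the rest of the proof is bookkeeping.
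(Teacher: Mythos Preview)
Your proposal is correct and follows essentially the same approach as the paper: the paper carries out all the verification of the canonical model $\langle \mathcal{W}, \leq, \mathcal{N}^{loc}, \mathcal{V}\rangle$ in the paragraph preceding the theorem (upset-valuedness, locality, canonicality of $\mathcal{N}^{loc}$) and then simply records that ``a canonical function $\mathcal{N}$ is what ensures that membership and truth coincide in the canonical model (Truth Lemma); the cases for the other formulas proceed by the standard induction.'' Your sketch just spells out those standard steps (the {\sf S4} existence lemma for $\square$ and the Lindenbaum step) a little more explicitly than the paper does.
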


The next step is to show that {\sf NS4} is the modal companion of {\sf N}. We start by recalling that it is possible to translate formulas from the intuitionistic language into the modal language via the G\"odel translation: 
\[
\begin{array}{rcl}
\bot^{\square} & = & \bot;\\[.1in]
p^{\square} & = & \square p;\\[.1in]
(\varphi \circ \psi)^{\square} & = & \varphi^{\square} \circ \psi^{\square}\text{, where }\circ \in \{\land, \lor\};\\[.1in]
(\varphi \to \psi)^{\square}  & = & \square (\varphi^{\square} \to \psi^{\square}). \\[.1in]
\end{array}
\]
The celebrated G\"odel-McKinsey-Tarski theorem states that for every intuitionistic formula $\varphi$, 
\[
{\sf IPC} \vdash \varphi \Longleftrightarrow {\sf S4} \vdash \varphi^{\square}. 
\]
\noindent The logic {\sf S4} is not the only modal companion of {\sf IPC} (e.g., Grzegorczyk's logic {\sf Grz} is another well-known example). A fundamental fact to be used in the proof is that any intuitionistic Kripke model $\mathfrak{M}$ can be seen as a model for ${\sf S4}$, and for every node in the model the following holds: 
\[
\mathfrak{M}, w \models \varphi \Longleftrightarrow \mathfrak{M}, w \models_{\square} \varphi^{\square},
\]
\noindent 
where $\models$ denotes the intuitionistic and $\models_{\square}$ the modal notion of truth. Similarly, every  {\sf N}-model has an equivalent {\sf NS4}-model, as any {\sf N}-frame $\langle W, \leq, {\rm N}\rangle$ induces a corresponding {\sf NS4}-frame $\langle W, \leq, {\rm N}^*\rangle$, with ${\rm N}^*\colon \mathcal{P}(W) \to \mathcal{U}(W, \leq)$ defined by:
\[
{\rm N}^*(X) \coloneqq \{w \in W \mid (\exists Y \in \mathcal{U}(W, \leq))(X \cap R(w) = Y\cap R(w)\text{ and }w \in {\rm N}(Y))\}.
\]
We start by adapting the G\"odel translation in order to obtain a translation of {\sf N} into {\sf NS4}. It is sufficient to add a translation clause for the negation in the language of {\sf N}:
\[
(\neg\varphi)^{\square} = \blacksquare \varphi^{\square},
\] 
and to discard the clause for $\bot$. The following preliminary result is easy to prove.

\begin{lemma}\label{l:frame}
Let $\mathfrak{M} = \langle W, \leq , {\rm N}, V \rangle$ be a model for {\sf N}, and let $\mathfrak{M}^* = \langle W, \leq , {\rm N}^*, V \rangle$ be the corresponding N-model for {\sf NS4} defined in the way explained above. Then, for every formula $\varphi$,
\[
\mathfrak{M}, w \models \varphi \Longleftrightarrow \mathfrak{M}^*, w \models_{\square} \varphi^{\square},
\]
where $\models$ and $\models_{\square}$ are, respectively, the {\sf N} and the modal notion of truth.
\end{lemma}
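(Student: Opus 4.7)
The plan is to proceed by induction on the complexity of $\varphi$, strengthening the induction to the statement $V(\varphi) = V^*(\varphi^{\square})$ as subsets of $W$ (which is equivalent to the pointwise claim). First I would dispatch the propositional cases. For $\varphi = p$: since $V(p)$ is an upset of $\langle W,\leq \rangle$ and $\leq$ is reflexive, $w \in V(p)$ iff $R(w) \subseteq V(p)$ iff $\mathfrak{M}^*, w \models_{\square} \square p$. The cases $\varphi = \psi \land \chi$ and $\varphi = \psi \lor \chi$ are immediate. For $\varphi = \psi \to \chi$, the persistence of intuitionistic truth lines up exactly with the $\square$-guarded translation $(\psi \to \chi)^{\square} = \square(\psi^{\square} \to \chi^{\square})$, so the clause $\mathfrak{M}, w \models \psi \to \chi$ iff $(\forall v \geq w)(v \models \psi \Rightarrow v \models \chi)$ translates directly using the induction hypothesis and the defining clause of $\square$.

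The essential new case is $\varphi = \neg\psi$, where $\varphi^{\square} = \blacksquare \psi^{\square}$. By the inductive hypothesis, $V(\psi) = V^*(\psi^{\square})$, and this set is an upset. I need to show that $w \in {\rm N}(V(\psi))$ iff $w \in {\rm N}^*(V^*(\psi^{\square}))$. For the forward direction, take $Y \coloneqq V(\psi)$ itself as the witnessing upset in the definition of ${\rm N}^*$: trivially $V^*(\psi^{\square}) \cap R(w) = Y \cap R(w)$, and $w \in {\rm N}(Y)$ by assumption. For the converse, suppose $Y \in \mathcal{U}(W,\leq)$ witnesses $w \in {\rm N}^*(V^*(\psi^{\square}))$, so $V(\psi) \cap R(w) = Y \cap R(w)$ and $w \in {\rm N}(Y)$. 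Applying the locality reformulation of~(\ref{eq:locality}) quoted in the footnote (i.e., $w \in {\rm N}(X) \iff w \in {\rm N}(X \cap R(w))$) twice, one gets $w \in {\rm N}(Y)$ iff $w \in {\rm N}(Y \cap R(w)) = {\rm N}(V(\psi) \cap R(w))$ iff $w \in {\rm N}(V(\psi))$, as required.

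Alongside the induction I would verify that $V^*(\varphi^{\square})$ is upward closed in $\langle W,\leq\rangle$, which is what makes the inductive identification $V(\varphi) = V^*(\varphi^{\square})$ meaningful and stable: for atoms this uses that $V(p)$ is already an upset, for the boxed implication and box-clause it uses transitivity via $R(w') \subseteq R(w)$ whenever $w \leq w'$, and for $\blacksquare$ it uses that ${\rm N}^*$ takes values in $\mathcal{U}(W,\leq)$ by construction. The only potential obstacle is the negation case, but the definition of ${\rm N}^*$ is engineered precisely for this: the existential over upsets $Y$ relaxes the argument of ${\rm N}$ from an arbitrary subset of $W$ to an upset that agrees with it on $R(w)$, and the locality condition on ${\rm N}$ closes the gap between agreement on $R(w)$ and equal ${\rm N}$-values at $w$. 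Everything else is bookkeeping.
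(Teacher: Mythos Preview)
Your proof is correct and is exactly the routine induction the paper has in mind; the paper itself omits the argument entirely, stating only that the result ``is easy to prove.'' The one point worth noting is that in the converse of the negation case you implicitly use that $Y \cap R(w)$ and $V(\psi) \cap R(w)$ are upsets (so that the footnoted form of locality applies), which holds since $R(w)$, $Y$, and $V(\psi)$ are all upsets---this is fine, but you might make it explicit.
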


\noindent In what follows, we allow ourselves to identify $\square \varphi \land \square \psi$ and $\square(\varphi \land \psi)$ for formulas $\varphi$ and $\psi$, omitting thereby some trivial steps in the derivations.

\begin{theorem}
For every formula $\varphi \in \mathcal{L}({\sf Prop})$,
\[
\mathsf{N} \vdash \varphi \Longleftrightarrow {\sf NS4} \vdash \varphi^{\square}.
\]
\end{theorem}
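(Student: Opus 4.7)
The plan is to prove the two directions separately, with the forward direction handled syntactically and the backward direction semantically.

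For the forward direction ($\Rightarrow$), I would proceed by induction on the length of a derivation of $\varphi$ in $\mathsf{N}$. The positive logic axioms translate to theorems of $\mathsf{S4}$ by the classical Gödel--McKinsey--Tarski theorem, hence they translate to theorems of $\mathsf{NS4}$. The only genuinely new axiom of $\mathsf{N}$ is $(p \leftrightarrow q) \to (\neg p \leftrightarrow \neg q)$; unfolding the translation and using the standard $\mathsf{S4}$-equivalence $\square A \land \square B \leftrightarrow \square(A \land B)$, the result is equivalent to $\square(\square A \to \square B)$, where $A \coloneqq \square p \leftrightarrow \square q$ and $B \coloneqq \blacksquare\square p \leftrightarrow \blacksquare\square q$. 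To derive this in $\mathsf{NS4}$, I substitute $\square p$ for $p$ and $\square q$ for $q$ in axiom~(\ref{eq:modaln}) to obtain $\square A \to B$, then apply necessitation and axiom~(K) to pull the box across, then axiom~(4) to absorb the extra $\square$, giving $\square A \to \square B$, and finally necessitate once more. The rule of modus ponens is handled using axiom~(T): from $\vdash_{\mathsf{NS4}} \varphi^\square$ and $\vdash_{\mathsf{NS4}} \square(\varphi^\square \to \psi^\square)$, axiom~(T) together with classical modus ponens yields $\vdash_{\mathsf{NS4}} \psi^\square$.

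For the backward direction ($\Leftarrow$), I would argue contrapositively using the semantic machinery already set up. Suppose $\mathsf{N} \not\vdash \varphi$. By the completeness of $\mathsf{N}$ with respect to N-semantics (recalled in Section~\ref{s:prelim}), there exist an N-model $\mathfrak{M} = \langle W, \leq, {\rm N}, V\rangle$ and a point $w \in W$ with $\mathfrak{M}, w \not\models \varphi$. The associated structure $\mathfrak{M}^* = \langle W, \leq, {\rm N}^*, V\rangle$, defined as in the preamble to Lemma~\ref{l:frame}, is an N-model for $\mathsf{NS4}$: the function ${\rm N}^*$ takes upward closed values (using the upward closure of ${\rm N}(Y)$ for the admissible witnesses $Y$), and its defining clause yields the locality condition~(\ref{eq:localm}) directly. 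By Lemma~\ref{l:frame}, $\mathfrak{M}^*, w \not\models_\square \varphi^\square$, and the soundness of $\mathsf{NS4}$ then gives $\mathsf{NS4} \not\vdash \varphi^\square$.

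The main obstacle is the forward direction, specifically the $\mathsf{NS4}$-derivation of the translation of the negation axiom of $\mathsf{N}$. The axiom~(\ref{eq:modaln}) alone delivers only $\square A \to B$, and one must rely on the interplay between the non-normal modality $\blacksquare$ and the normal $\mathsf{S4}$-machinery (in particular axiom~(4)) to upgrade this to the required $\square(\square A \to \square B)$; axiom~(\ref{eq:modal2}) is not actually needed for this step but reflects the persistence of $\blacksquare$ that justifies the clause $(\neg\varphi)^\square = \blacksquare\varphi^\square$ semantically. The backward direction, by contrast, is essentially packaged by Lemma~\ref{l:frame} together with the previously established soundness and completeness results, once one verifies that the model transformation $\mathfrak{M} \mapsto \mathfrak{M}^*$ produces a legitimate $\mathsf{NS4}$-frame.
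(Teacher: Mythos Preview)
Your proposal is correct and follows essentially the same route as the paper. For the forward direction you derive the translation of the negation axiom by the same chain (instance of axiom~(\ref{eq:modaln}), then necessitation with (K), then (4), then a final necessitation), and for the backward direction you invoke completeness of $\mathsf{N}$, Lemma~\ref{l:frame}, and soundness of $\mathsf{NS4}$, exactly as the paper does; your version is simply more explicit about the induction on derivations, the treatment of the positive axioms and modus ponens, and the verification that $\mathfrak{M}^*$ is an $\mathsf{NS4}$-frame.
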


\begin{proof}


From left to right, we show that ${\sf NS4} \vdash \big{(} (p \leftrightarrow q) \to (\neg p \leftrightarrow \neg q) \big{)}^{\square}$, that is, ${\sf NS4} \vdash \square \big{(}\square(\square p \leftrightarrow \square q) \to \square (\blacksquare \square p \leftrightarrow \blacksquare \square q)\big{)}.$ Indeed:
\[
\begin{array}{lclr}
& \vdash & \square (\square p \leftrightarrow \square q) \to (\blacksquare \square p \leftrightarrow \blacksquare \square q) & \text{By axiom }(\ref{eq:modaln})\\[.1in]
& \vdash & \square\square (\square p \leftrightarrow \square q) \to \square (\blacksquare \square p \leftrightarrow \blacksquare \square q) \,\,\,\,\,\,\,&\,\,\,\,\,\,\,\, \text{By necessitation}\\ [.1in]
& \vdash &  \square (\square p \leftrightarrow \square q) \to  \square\square (\square p \leftrightarrow \square q) & \text{By (4)}\\ [.1in]
& \vdash &  \square (\square p \leftrightarrow \square q) \to \square (\blacksquare \square p \leftrightarrow \blacksquare \square q)&\\ [.1in]
& \vdash &  \square\big{(}\square (\square p \leftrightarrow \square q) \to \square (\blacksquare \square p \leftrightarrow \blacksquare \square q)\big{)}&\,\,\,\,\,\,\,\, \text{By necessitation}
\end{array}
\]
For the other direction, suppose that there exists an {\sf N}-countermodel of a formula $\varphi$. By Lemma~\ref{l:frame}, this leads to a countermodel for $\varphi^{\square}$ in {\sf NS4}, as desired.  
\end{proof}

The next part of the section aims for a better understanding of the bi-modal logic {\sf NS4} by studying the behaviour of the modality $\blacksquare$ resulting from the subminimal negation. We hence focus now on proving the following main result. 
\\
\\
\textbf{Theorem~\ref{t:blackbox}.}
{\em The $\{\land, \lor, \to, \blacksquare\}$-fragment of {\sf NS4} is axiomatized by the following countably many rules:
\[
\infer[{\rm R}_n]{(\blacksquare p_1 \land \dots \land \blacksquare p_n) \to (\blacksquare q \leftrightarrow \blacksquare r)}{(\blacksquare p_1 \land \dots \land \blacksquare p_n) \to (q \leftrightarrow r)}
\]
for each $n \in \mathbb{N}$.}
\\
\\
\noindent In what follows, we call the logic axiomatized by this countable set of rules ${\sf E_\mathbb{N}}$. This notation is justified by the fact that \emph{classical modal logic} is the non-normal modal logic axiomatized by:
\[
\infer[]{\square p \leftrightarrow \square q}{p \leftrightarrow q}
\]
and is denoted by {\sf E}. From the perspective of the N-semantics, the defining rule of {\sf E} is expressed by the fact that {\rm N} lifts to a well-defined function on sets.

First, we give a proof that each of these rules is derivable in {\sf NS4}. For each rule, we provide a derivation of the conclusion from the premise in {\sf NS4}.

\begin{lemma}
The rule 
\[
\infer[{\rm R}_n]{(\blacksquare p_1 \land \dots \land \blacksquare p_n) \to (\blacksquare q \leftrightarrow \blacksquare r)}{(\blacksquare p_1 \land \dots \land \blacksquare p_n) \to (q \leftrightarrow r)}
\]
is derivable in {\sf NS4}, for each $n \in \mathbb{N}$.
\end{lemma}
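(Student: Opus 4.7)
The plan is to exploit the two bi-modal axioms~(\ref{eq:modaln}) and~(\ref{eq:modal2}) together with the fact that $\square$ is a normal modality (so necessitation and K-distribution are available) to reduce the rule $R_n$ to a short derivation in $\mathsf{NS4}$. The guiding idea is: necessitation lets us promote the premise to a $\square$-formula, K then strips off the $\square$ past the implication, axiom~(\ref{eq:modal2}) allows us to feed each conjunct $\blacksquare p_i$ into the resulting antecedent $\square(\blacksquare p_1 \land \dots \land \blacksquare p_n)$, and finally axiom~(\ref{eq:modaln}) transforms $\square(q \leftrightarrow r)$ into $\blacksquare q \leftrightarrow \blacksquare r$.

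Concretely, I would proceed as follows. Starting from the premise $\vdash (\blacksquare p_1 \land \dots \land \blacksquare p_n) \to (q \leftrightarrow r)$, apply necessitation to obtain $\vdash \square\bigl[(\blacksquare p_1 \land \dots \land \blacksquare p_n) \to (q \leftrightarrow r)\bigr]$. Using the K axiom together with the identification of $\square\varphi \land \square\psi$ with $\square(\varphi \land \psi)$ mentioned in the excerpt, this yields
\[
\vdash (\square\blacksquare p_1 \land \dots \land \square\blacksquare p_n) \to \square(q \leftrightarrow r).
\]
Next, $n$-fold application of axiom~(\ref{eq:modal2}) gives $\vdash \blacksquare p_i \to \square\blacksquare p_i$ for each $i$, whence $\vdash (\blacksquare p_1 \land \dots \land \blacksquare p_n) \to (\square\blacksquare p_1 \land \dots \land \square\blacksquare p_n)$. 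Chaining with the previous implication produces $\vdash (\blacksquare p_1 \land \dots \land \blacksquare p_n) \to \square(q \leftrightarrow r)$. Finally, axiom~(\ref{eq:modaln}) supplies $\vdash \square(q \leftrightarrow r) \to (\blacksquare q \leftrightarrow \blacksquare r)$, and transitivity of implication yields the conclusion of $R_n$.

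There is no real obstacle in this argument; it is essentially a computation. The only point deserving a moment's care is the observation that $\blacksquare$ behaves well enough under necessitation-style reasoning to get started: necessitation applies only to theorems, which is why we promote the \emph{premise} (a theorem, by hypothesis of derivability) rather than attempting to necessitate a non-theorem. Axiom~(\ref{eq:modal2}) is the ingredient that lets each $\blacksquare p_i$ survive the subsequent use of K, and so is the critical link between the $\blacksquare$-conjuncts on the left and the $\square$-formula produced by the K-step. With these observations the derivation is straightforward, and the verification reduces to the explicit chain of steps described above.
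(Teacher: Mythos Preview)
Your argument is correct and follows essentially the same route as the paper's proof: necessitate the premise, use K (together with the identification $\square\varphi\land\square\psi\equiv\square(\varphi\land\psi)$) to obtain $(\square\blacksquare p_1\land\dots\land\square\blacksquare p_n)\to\square(q\leftrightarrow r)$, then use axiom~(\ref{eq:modal2}) on each conjunct and axiom~(\ref{eq:modaln}) on the consequent, exactly as the paper does. The only cosmetic difference is that the paper separates out the trivial case $n=0$ and orders the steps slightly differently.
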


\begin{proof} 
The case $n=0$ is clear. For $n>0$, assume $(\blacksquare p_1 \land \dots \land \blacksquare p_n) \to (q \leftrightarrow r)$. But then:
\[
\begin{array}{lclr}
& \vdash & \blacksquare p_i \to \square\blacksquare p_i, \text{ for }i \in \{1,\ldots,n\} & \text{By axiom }(\ref{eq:modal2}) \\[.1in]
& \vdash & (\blacksquare p_1 \land \dots \land \blacksquare p_n) \to (\square\blacksquare p_1 \land \dots \land \square \blacksquare p_n) &\\ [.1in]
& \vdash & (\blacksquare p_1 \land \dots \land \blacksquare p_n) \to \square(\blacksquare p_1 \land \dots \land \blacksquare p_n) & {\rm (a)}\\ [.1in]
& \vdash & \square (\blacksquare p_1 \land \dots \land \blacksquare p_n) \to \square (q \leftrightarrow r) & \,\,\,\,\,\,\,\,\,\text{By necessitation}\\[.1in]
& \vdash & \square (q \leftrightarrow r) \to (\blacksquare q \leftrightarrow \blacksquare r) & \text{By axiom }(\ref{eq:modaln}) \\[.1in]
& \vdash & \square (\blacksquare p_1 \land \dots \land \blacksquare p_n) \to (\blacksquare q \leftrightarrow \blacksquare r) & \text{(b)}\\[.1in]
& \vdash & (\blacksquare p_1 \land \dots \land \blacksquare p_n) \to (\blacksquare q \leftrightarrow \blacksquare r) & \text{By {\rm (a)} and {\rm (b)}}
\end{array}
\]
\end{proof}

\noindent As a consequence, we obtain the following partial result:

\begin{lemma}\label{l:sounde}
Given a $\{\land,\lor,\to,\blacksquare\}$-formula $\varphi$, 
\[
{\sf E}_{\mathbb{N}} \vdash \varphi \Longrightarrow {\sf NS4} \vdash \varphi.
\]
\end{lemma}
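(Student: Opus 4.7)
The plan is to proceed by induction on the length of a derivation of $\varphi$ in ${\sf E}_{\mathbb{N}}$. By definition, a proof in ${\sf E}_{\mathbb{N}}$ can use classical propositional tautologies (restricted to the fragment $\{\land, \lor, \to, \blacksquare\}$, reading $\blacksquare$-formulas as new propositional atoms), the rule of modus ponens, uniform substitution, and the rules ${\rm R}_n$ for $n \in \mathbb{N}$. To conclude the result it therefore suffices to verify that each of these ingredients is admissible (in the sense of preserving theoremhood) in ${\sf NS4}$.

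First I would dispense with the base case: since ${\sf NS4}$ extends classical propositional logic, every classical tautology in the $\{\land, \lor, \to, \blacksquare\}$-fragment is trivially a theorem of ${\sf NS4}$. Second, modus ponens and uniform substitution are among the primitive rules of ${\sf NS4}$, so if the premises of an application are already established as ${\sf NS4}$-theorems by the inductive hypothesis, the conclusion is a ${\sf NS4}$-theorem as well. The inductive step that remains concerns the rules ${\rm R}_n$: if
\[
(\blacksquare p_1 \land \dots \land \blacksquare p_n) \to (q \leftrightarrow r)
\]
is derivable in ${\sf E}_{\mathbb{N}}$ and, by the inductive hypothesis, in ${\sf NS4}$, then the conclusion
\[
(\blacksquare p_1 \land \dots \land \blacksquare p_n) \to (\blacksquare q \leftrightarrow \blacksquare r)
\]
is derivable in ${\sf NS4}$ by invoking exactly the derivation supplied in the previous lemma.

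No step in this argument presents a real obstacle; the content of the lemma is essentially bookkeeping, with the previous lemma carrying the modal weight. The only mild subtlety I would be careful about is ensuring that the proof-theoretic machinery of ${\sf E}_{\mathbb{N}}$ is interpreted uniformly: instances of ${\rm R}_n$ come with a uniform substitution allowance, which is unproblematic because the derivation in the previous lemma relies on axiom schemas~(\ref{eq:modaln}) and~(\ref{eq:modal2}) together with necessitation and (4), all of which are closed under substitution in ${\sf NS4}$. Hence the induction goes through and establishes the implication ${\sf E}_{\mathbb{N}} \vdash \varphi \Rightarrow {\sf NS4} \vdash \varphi$ for every $\{\land, \lor, \to, \blacksquare\}$-formula $\varphi$.
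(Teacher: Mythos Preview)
Your proposal is correct and follows the same route as the paper: the paper simply records the lemma as an immediate consequence of the preceding lemma (derivability of each ${\rm R}_n$ in {\sf NS4}), and your induction on the length of an ${\sf E}_{\mathbb{N}}$-derivation is precisely the standard way to spell out that consequence. There is nothing to add.
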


In order to conclude the proof of Theorem~\ref{t:blackbox} we take an ${\sf E}_{\mathbb{N}}$\nbd{-}countermodel for a $\{\land,\lor,\to,\blacksquare\}$-formula $\varphi$, and build from it an {\sf NS4}-countermodel for $\varphi$. Therefore, we first provide a completeness result for the logic ${\sf E}_{\mathbb{N}}$. 

Consider the modal N-frames $\langle W, {\rm N} \rangle$ where, for every $X, Y, Z_1, \ldots, Z_n \in \mathcal{P}(W)$ and $n \in \mathbb{N}$, the function ${\rm N} \colon \mathcal{P}(W) \to \mathcal{P}(W)$ satisfies $({\rm E}_n)$:
\[
\mathrm{N}(X)\cap \mathrm{N}(Z_1)\cap\dots\cap \mathrm{N}(Z_n)\,{=}\,\mathrm{N}(X\cap \mathrm{N}(Z_1)\cap\dots\cap \mathrm{N}(Z_n))\cap \mathrm{N}(Z_1)\cap\dots\cap \mathrm{N}(Z_n).
\]
The corresponding notion of model is defined in the standard way. 
We next show that ${\rm R}_n$ is characterized by ${\rm E}_n$.
\begin{proposition}
For any $n \in \mathbb{N}$, a modal N-frame $\mathfrak{F} = \langle W, {\rm N} \rangle$ satisfies ${\rm E}_n$ if and only if $\mathfrak{F} \models {\rm R}_n$.
\end{proposition}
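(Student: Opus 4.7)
The plan is to unpack both rule validity and the frame condition into a single set-theoretic identity, with only the choice of witnesses distinguishing the two sides. Given any valuation $V$ on $\mathfrak{F}$, set $Y_i \coloneqq {\rm N}(V(p_i))$; validity of the premise $(\blacksquare p_1 \land \dots \land \blacksquare p_n) \to (q \leftrightarrow r)$ on $\langle \mathfrak{F}, V \rangle$ amounts to
\[
V(q) \cap Y_1 \cap \dots \cap Y_n = V(r) \cap Y_1 \cap \dots \cap Y_n,
\]
while validity of the conclusion amounts to
\[
{\rm N}(V(q)) \cap Y_1 \cap \dots \cap Y_n = {\rm N}(V(r)) \cap Y_1 \cap \dots \cap Y_n.
\]

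For the direction $(\Leftarrow)$, assume $\mathfrak{F} \models {\rm R}_n$ and fix arbitrary $X, Z_1, \ldots, Z_n \subseteq W$. Choose the valuation $V(p_i) \coloneqq Z_i$, $V(q) \coloneqq X$, and $V(r) \coloneqq X \cap {\rm N}(Z_1) \cap \dots \cap {\rm N}(Z_n)$. By construction, on points of ${\rm N}(Z_1) \cap \dots \cap {\rm N}(Z_n)$ the truth sets of $q$ and $r$ coincide, so the premise of ${\rm R}_n$ is valid on $\langle \mathfrak{F}, V \rangle$. Applying ${\rm R}_n$ gives validity of the conclusion, and reading off the corresponding set identity produces precisely the instance of ${\rm E}_n$ for $X, Z_1, \ldots, Z_n$.

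For the direction $(\Rightarrow)$, assume ${\rm E}_n$ and suppose the premise of ${\rm R}_n$ is valid under some $V$, which by the unpacking above gives the first identity. Apply ${\rm E}_n$ twice---once with $X = V(q)$ and $Z_i = V(p_i)$, once with $X = V(r)$---to rewrite ${\rm N}(V(q)) \cap Y_1 \cap \dots \cap Y_n$ as ${\rm N}(V(q) \cap Y_1 \cap \dots \cap Y_n) \cap Y_1 \cap \dots \cap Y_n$, and similarly for $r$. The premise forces the two arguments of the outer ${\rm N}$ to coincide, so the conclusion follows, establishing $\mathfrak{F} \models {\rm R}_n$.

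The only real subtlety lies in $(\Leftarrow)$: one must pick $V(r) = V(q) \cap \bigcap_i {\rm N}(Z_i)$ rather than merely $V(q)$ itself, because the premise has to be \emph{valid on the frame} for ${\rm R}_n$ to fire, and this intersection-based choice makes $q \leftrightarrow r$ automatically true wherever $\blacksquare p_1 \land \dots \land \blacksquare p_n$ holds. After that, both directions are a direct translation between rule validity and the set-theoretic identity ${\rm E}_n$.
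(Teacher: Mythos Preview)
Your proof is correct and follows essentially the same approach as the paper: both directions use the identical valuation choice $V(p_i)=Z_i$, $V(q)=X$, $V(r)=X\cap\bigcap_i{\rm N}(Z_i)$ for $(\Leftarrow)$, and the same two applications of ${\rm E}_n$ for $(\Rightarrow)$. The only cosmetic differences are that the paper argues $(\Leftarrow)$ by contradiction and $(\Rightarrow)$ pointwise at a node $w$, whereas you phrase both directly as set identities; also, your parenthetical ``valid on the frame'' should read ``valid on the model $\langle\mathfrak{F},V\rangle$''.
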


\begin{proof}
From left to right, consider a frame $\mathfrak{F} = \langle W, {\rm N} \rangle$ satisfying ${\rm E}_n$, and assume that for a given valuation $V$, 
\[
\langle \mathfrak{F}, V \rangle \models (\blacksquare p_1 \land \dots \land \blacksquare p_n) \to (q \leftrightarrow r).
\]
This means that:
\[
V(q)\cap \mathrm{N}(V(\blacksquare p_1))\cap\dots\cap \mathrm{N}(V(\blacksquare p_n))=V(r)\cap \mathrm{N}(V(\blacksquare p_1))\cap\dots\cap \mathrm{N}(V(\blacksquare p_n)).
\]
Now, suppose $\langle \mathfrak{F}, V \rangle, w \models \blacksquare p_1 \land \dots \land \blacksquare p_n$, that is,
\[
w \in {\rm N}(V(p_1)) \cap \dots \cap {\rm N}(V(p_n)).
\]
It is now sufficient to prove that $w\models\blacksquare q$ if and only if $w\models\blacksquare r$. Indeed, without loss of generality, $w \in \mathrm{N}(V(q))$ entails 
\[
w\in \mathrm{N}(V(q))\cap \mathrm{N}(V(p_1))\cap\dots\cap \mathrm{N}(V(p_n)).
\]
Further, by ${\rm E}_n$ we get 
\[
w\in \mathrm{N}(V(q)\cap \mathrm{N}(V( p_1))\cap\dots\cap \mathrm{N}(V( p_n)))\cap \mathrm{N}(V( p_1))\cap\dots\cap \mathrm{N}(V( p_n))
\]
that is equivalent to 
\[
w\in \mathrm{N}(V(r)\cap \mathrm{N}(V( p_1))\cap\dots\cap \mathrm{N}(V( p_n)))\cap \mathrm{N}(V( p_1))\cap\dots\cap \mathrm{N}(V( p_n))
\]
by our assumption. Therefore, by ${\rm E}_n$ again, $w\in \mathrm{N}(V(r))$. It follows that $\mathfrak{F} \models {\rm R}_n$.

For the right-to-left direction, we assume $\mathfrak{F} \models {\rm R}_n$, and suppose that there exist subsets $X, Y, Z_1, \ldots, Z_n$ such that
\begin{equation}\label{eq:contradiction}
\mathrm{N}(X)\cap \mathrm{N}(Z_1)\cap\dots\cap \mathrm{N}(Z_n)\,{\ne}\,\mathrm{N}(X\cap \mathrm{N}(Z_1)\cap\dots\cap \mathrm{N}(Z_n))\cap \mathrm{N}(Z_1)\cap\dots\cap \mathrm{N}(Z_n).
\end{equation}
Set $V(p_i)=Z_i$ for $i \in \{1, \ldots, n\}$, $V(q)=X$, and $V(r)=X\cap {\rm N}(Z_1)\cap\dots\cap {\rm N}(Z_n)$, for propositional variables $p_i, q, r$. But then, $V(\blacksquare p_i)={\rm N}(Z_i)$, $V(\blacksquare q)={\rm N}(X)$ and 
\[
V(\blacksquare r)={\rm N}(X\cap {\rm N}(Z_1)\cap\dots\cap {\rm N}(Z_n)).
\]
Take any $w\in {\rm N}(Z_1)\cap\dots\cap {\rm N}(Z_n)$. Obviously then $w\in X$ is equivalent to $w\in X\cap {\rm N}(Z_1)\cap\dots\cap {\rm N}(Z_n)$, and hence, $(\blacksquare p_1\land\dots\land\blacksquare p_n)\to (q\leftrightarrow r)$ holds in $\mathfrak F$. But then,  $(\blacksquare p_1\land\dots\land\blacksquare p_n)\to (\blacksquare q\leftrightarrow\blacksquare  r)$ holds in $\mathfrak F$ as well by ${\rm R}_n$, which contradicts~(\ref{eq:contradiction}).  
\end{proof}

\noindent We conclude the completeness proof by means of a canonical model construction. Consider again the set $\mathcal{W}$ of maximally consistent sets of $\{\land,\lor,\to,\blacksquare\}$-formulas. For any $\rm\Gamma\in\mathcal{W}$, set $\mathcal{R}(\rm\Gamma)\coloneqq\{\rm{\Delta} \in \mathcal{W}\,|\,\forall\chi(\blacksquare\chi\in\rm{\Gamma}\Rightarrow \blacksquare\chi\in\rm{\Delta})\}$ and define 
\begin{equation}\label{eq:Ndef}
\mathcal{N}(X) \coloneqq \{{\rm \Gamma}\in\mathcal{W} \, | \, (\exists \blacksquare \psi \in {\rm \Gamma})(X \cap \mathcal{R}(\rm\Gamma) = | \psi | \cap \mathcal{R}(\rm\Gamma))\},
\end{equation}
for any $X\in\mathcal{P(W)}$. We need to prove that $\langle \mathcal{W}, \mathcal{N} \rangle$ is an N-frame satisfying ${\rm E}_n$ for each $n \in \mathbb{N}$. Consider a sequence of subsets $X, Z_1,\dots,Z_n \subseteq \mathcal{W}$, and take ${\rm \Gamma} \in \mathcal{N}(Z_1)\cap \ldots\cap \mathcal{N}(Z_n)$. To show that E$_n$ holds it will be sufficient to prove 
\begin{equation}\label{eq:Ndefmember}
{\rm\Gamma}\in\mathcal{N}(X)\mbox{ if and only if  }{\rm\Gamma}\in\mathcal{N}(X\cap \mathcal{N}(Z_1)\cap\dots\cap \mathcal{N}(Z_n)).
\end{equation}
By~(\ref{eq:Ndef}), ${\rm \Gamma} \in \mathcal{N}(Z_1)\cap \ldots\cap \mathcal{N}(Z_n)$ means that, for each $i \in \{1, \ldots, n\}$, \mbox{there exists }$\blacksquare\psi_i\in{\rm\Gamma}$\mbox{ such that }
\begin{equation}\label{black}
Z_i\cap\mathcal{R}({\rm\Gamma})=| \psi_i | \cap \mathcal{R}(\rm\Gamma).
\end{equation}
To prove~(\ref{eq:Ndefmember}), it suffices to show that ${\rm\Delta}\in X\cap \mathcal{R}(\rm\Gamma)$ implies ${\rm\Delta}\in \mathcal{N}(Z_i)$ 
for every ${\rm\Delta} \in \mathcal{W}$ and every $i \in \{1, \ldots, n\}$.
So, assume ${\rm\Delta}\in X\cap \mathcal{R}(\rm\Gamma)$. But  ${\rm\Delta}\in \mathcal{R}(\rm\Gamma)$ means, by~(\ref{black}), that for each $i \in \{1, \ldots, n\}$, $\blacksquare\psi_i\in{\rm\Delta}$. Moreover, $\mathcal{R}(\rm\Delta)\subseteq\mathcal{R}(\rm\Gamma)$. So
\[
Z_i\cap\mathcal{R}({\rm\Delta})=Z_i\cap\mathcal{R}({\rm\Gamma})\cap\mathcal{R}({\rm\Delta})=| \psi_i | \cap \mathcal{R}(\rm\Gamma)\cap\mathcal{R}({\rm\Delta})=|\psi_i|\cap\mathcal{R}({\rm\Delta}),
\] 
and hence, ${\rm\Delta}\in \mathcal{N}(Z_i)$ for each $i \in \{1, \ldots, n\}$ by~(\ref{black}) again. By defining now the canonical valuation as $\mathcal{V}(p) = |p|$, it is standard to prove that $\varphi \in {\rm \Gamma}$ if and only if $\mathcal{M}, {\rm \Gamma} \models \varphi$, with $\mathcal{M} = \langle \mathcal{W}, \mathcal{N}, \mathcal{V} \rangle$. Therefore:

\begin{proposition}\label{p:complblack}
The logic ${\sf E}_{\mathbb{N}}$ is sound and complete with respect to the {\rm N}-models satisfying $\{{\rm E}_n \colon n \in \mathbb{N}\}$.  
\end{proposition}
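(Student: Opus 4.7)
The soundness direction follows immediately from the preceding correspondence: every modal N-frame whose function $\mathrm{N}$ satisfies $\mathrm{E}_n$ validates the rule $\mathrm{R}_n$. Combined with soundness of the classical propositional base in the $\{\land,\lor,\to\}$-fragment, this gives soundness of $\mathsf{E}_\mathbb{N}$ with respect to the class of N-models satisfying every $\mathrm{E}_n$.

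For completeness I would rely on the canonical model $\mathcal{M} = \langle \mathcal{W}, \mathcal{N}, \mathcal{V}\rangle$ constructed in the paragraph preceding the proposition; the verification that $\mathcal{N}$ satisfies $\mathrm{E}_n$ for every $n \in \mathbb{N}$ has already been carried out there. The remaining obligation is the standard Truth Lemma, $\varphi \in \Gamma \Leftrightarrow \mathcal{M}, \Gamma \models \varphi$ for every $\Gamma \in \mathcal{W}$. Once it is in place, a Lindenbaum extension applied to any non-theorem $\varphi$ yields an MCS $\Gamma_0 \in \mathcal{W}$ refuting $\varphi$ in an N-model satisfying all $\mathrm{E}_n$, which finishes the argument.

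I would prove the Truth Lemma by induction on $\varphi$, with the Boolean cases being routine consequences of MCS properties. The main obstacle is the case $\varphi = \blacksquare \chi$. The direction $\blacksquare \chi \in \Gamma \Rightarrow \Gamma \in \mathcal{N}(|\chi|)$ is immediate from the definition~(\ref{eq:Ndef}) by choosing the witness $\psi := \chi$, since then $|\chi| \cap \mathcal{R}(\Gamma) = |\psi| \cap \mathcal{R}(\Gamma)$ holds trivially. The converse is where the rules $\mathrm{R}_n$ do their work. Assume $\Gamma \in \mathcal{N}(|\chi|)$ and pick a witness $\blacksquare \psi \in \Gamma$ with $|\chi| \cap \mathcal{R}(\Gamma) = |\psi| \cap \mathcal{R}(\Gamma)$. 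Unwinding the definition of $\mathcal{R}(\Gamma)$, this equality says exactly that $\chi \leftrightarrow \psi$ belongs to every MCS containing $\{\blacksquare \theta : \blacksquare \theta \in \Gamma\}$. A standard MCS-level compactness argument then supplies finitely many $\blacksquare \theta_1, \ldots, \blacksquare \theta_n \in \Gamma$ such that
\[
\vdash_{\mathsf{E}_\mathbb{N}} (\blacksquare \theta_1 \land \cdots \land \blacksquare \theta_n) \to (\chi \leftrightarrow \psi).
\]
Applying the rule $\mathrm{R}_n$ yields
\[
\vdash_{\mathsf{E}_\mathbb{N}} (\blacksquare \theta_1 \land \cdots \land \blacksquare \theta_n) \to (\blacksquare \chi \leftrightarrow \blacksquare \psi),
\]
whence $\blacksquare \chi \leftrightarrow \blacksquare \psi \in \Gamma$, and combined with $\blacksquare \psi \in \Gamma$ we conclude $\blacksquare \chi \in \Gamma$, closing the induction.
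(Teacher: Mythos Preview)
Your proposal is correct and follows the same canonical-model route as the paper: soundness via the frame correspondence, completeness via the canonical model $\langle \mathcal{W},\mathcal{N},\mathcal{V}\rangle$ whose verification of each ${\rm E}_n$ is done in the paragraph preceding the proposition. In fact you go further than the paper, which dismisses the Truth Lemma as ``standard'': your compactness-plus-${\rm R}_n$ argument for the nontrivial direction of the $\blacksquare$ case is exactly the content that the paper suppresses, and it is carried out correctly.
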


Now, we endow the canonical model of ${\sf E}_{\mathbb{N}}$ with a reflexive transitive relation so as to obtain an {\sf NS4}-model. Set ${\rm \Gamma} \leq {\rm \Delta}$ if and only if ${\rm \Delta}\in \mathcal{R}({\rm \Gamma})$. It is clear that $\mathcal{N}(X) \in \mathcal{U}(\mathcal{W},\leq)$, for any $X \in \mathcal{P}(\mathcal{W})$. It suffices to show that ${\rm \Gamma}\in\mathcal{N}({X})$ is equivalent to ${\rm \Gamma}\in\mathcal{N}({X}\cap\mathcal{R}({\rm \Gamma}))$, for ${\rm \Gamma} \in \mathcal{W}$. Indeed, we have ${\rm \Gamma}\in\mathcal{N}(X\cap\mathcal{R}({\rm \Gamma}))$ if and only if $X \cap \mathcal{R}(\rm\Gamma) \cap \mathcal{R}(\rm\Gamma) = | \psi | \cap \mathcal{R}(\rm\Gamma)$ for some $\blacksquare \psi \in {\rm \Gamma}$, that is, $X \cap \mathcal{R}(\rm\Gamma) = | \psi | \cap \mathcal{R}(\rm\Gamma)$. Therefore, ${\rm \Gamma}\in\mathcal{N}({X})$. The following is now immediate:

\begin{lemma}\label{l:compl}
Given a $\{\land,\lor,\to,\blacksquare\}$-formula $\varphi$, 
\[
{\sf NS4} \vdash \varphi \Longrightarrow {\sf E}_{\mathbb{N}} \vdash \varphi.
\]
\end{lemma}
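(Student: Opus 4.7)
The plan is to argue by contraposition. Suppose ${\sf E}_{\mathbb{N}} \not\vdash \varphi$ for some $\{\land,\lor,\to,\blacksquare\}$-formula $\varphi$. By the completeness half of Proposition~\ref{p:complblack}, there is an {\rm N}-model satisfying all $\{{\rm E}_n \colon n \in \mathbb{N}\}$ that refutes $\varphi$ at some point; in particular, the canonical model $\mathcal{M} = \langle \mathcal{W}, \mathcal{N}, \mathcal{V} \rangle$ built in the paragraphs preceding the lemma refutes $\varphi$ at some maximally consistent ${\rm \Gamma}_0 \in \mathcal{W}$.

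The next step is to promote $\mathcal{M}$ to a bona fide {\sf NS4}-model. I would equip $\mathcal{W}$ with the preorder ${\rm \Gamma} \leq {\rm \Delta} \iff {\rm \Delta} \in \mathcal{R}({\rm \Gamma})$ already defined above. Reflexivity and transitivity of $\leq$ follow in the standard way from the fact that $\blacksquare\psi \in {\rm \Gamma}$ is inherited along $\mathcal{R}$. The two conditions needed of $\mathcal{N}$ on an {\sf NS4}-frame—namely, upward closure of the values of $\mathcal{N}$ and the locality property ${\rm \Gamma}\in\mathcal{N}(X) \iff {\rm \Gamma}\in\mathcal{N}(X\cap R({\rm \Gamma}))$—are exactly the two verifications carried out in the paragraph immediately preceding the lemma. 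Thus $\mathcal{M}' = \langle \mathcal{W}, \leq, \mathcal{N}, \mathcal{V} \rangle$ is a well-defined {\sf NS4}-model.

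It then remains to show that the truth value of every $\{\land,\lor,\to,\blacksquare\}$-formula at any point is the same in $\mathcal{M}$ and in $\mathcal{M}'$. This is immediate by induction on the formula: the clauses for $\land, \lor, \to$ are Boolean and do not depend on $\leq$, while the clause for $\blacksquare\psi$ is $w \in \mathcal{N}(V(\psi))$ in both semantics. Hence $\mathcal{M}', {\rm \Gamma}_0 \not\models \varphi$. By the soundness direction already established for {\sf NS4}, this forces ${\sf NS4} \not\vdash \varphi$, completing the contrapositive.

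The only potentially delicate point is the verification that $\mathcal{N}$ is well behaved with respect to the newly added order, but since the author has performed this check in the run-up to the lemma, no genuine obstacle remains; the lemma is essentially an assembly of the canonical-model construction with the observation that the $\{\land,\lor,\to,\blacksquare\}$-satisfaction clauses are insensitive to the extra relational structure.
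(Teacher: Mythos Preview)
Your proposal is correct and follows essentially the same route as the paper: argue by contraposition, use the canonical ${\sf E}_{\mathbb{N}}$-model, endow it with the preorder ${\rm \Gamma}\leq{\rm \Delta}\iff{\rm \Delta}\in\mathcal{R}({\rm \Gamma})$, invoke the upward-closure and locality checks performed just before the lemma to see it is an {\sf NS4}-frame, and observe that the $\{\land,\lor,\to,\blacksquare\}$-truth clauses are unaffected by the added order. The paper compresses all of this into ``The following is now immediate,'' so your write-up is simply a more explicit rendering of the intended argument.
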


At this point, Theorem~\ref{t:blackbox} follows from Lemmas~\ref{l:sounde} and~\ref{l:compl}.

\begin{theorem}\label{t:blackbox}
The $\{\land, \lor, \to, \blacksquare\}$-fragment of {\sf NS4} is axiomatized by the following countably many rules:
\[
\infer[{\rm R}_n]{(\blacksquare p_1 \land \dots \land \blacksquare p_n) \to (\blacksquare q \leftrightarrow \blacksquare r)}{(\blacksquare p_1 \land \dots \land \blacksquare p_n) \to (q \leftrightarrow r)}
\]
for each $n \in \mathbb{N}$.
\end{theorem}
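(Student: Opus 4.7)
The plan is to prove the two inclusions ${\sf E}_\mathbb{N} \subseteq {\sf NS4}$ and ${\sf NS4} \cap \mathcal{L}_{\blacksquare} \subseteq {\sf E}_\mathbb{N}$ (where $\mathcal{L}_\blacksquare$ denotes the $\{\land, \lor, \to, \blacksquare\}$-fragment) separately. The first direction is essentially syntactic and uses that $\blacksquare$-formulas are ``modally stable'' under $\square$, while the second direction goes through a canonical model construction for ${\sf E}_\mathbb{N}$ that is then upgraded to an {\sf NS4}-model.

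For the direction ${\sf E}_\mathbb{N} \subseteq {\sf NS4}$, I would derive each rule ${\rm R}_n$ inside {\sf NS4}. The key ingredient is axiom~(\ref{eq:modal2}), which gives $\blacksquare p_i \vdash \square\blacksquare p_i$ for each $i$. Together with the distribution of $\square$ over $\land$, this yields $\blacksquare p_1 \land \dots \land \blacksquare p_n \vdash \square(\blacksquare p_1 \land \dots \land \blacksquare p_n)$. Assuming the premise of ${\rm R}_n$, necessitation and (K) give $\square(\blacksquare p_1 \land \dots \land \blacksquare p_n) \to \square(q \leftrightarrow r)$, and combining this with axiom~(\ref{eq:modaln}) in the form $\square(q \leftrightarrow r) \to (\blacksquare q \leftrightarrow \blacksquare r)$ produces the conclusion of ${\rm R}_n$. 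Hence every theorem of ${\sf E}_\mathbb{N}$ is a theorem of {\sf NS4}.

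For the converse, my strategy is to find a semantic correspondent of each rule ${\rm R}_n$ and run a canonical model argument at the level of the $\blacksquare$-fragment. The natural candidate frame condition is the localised compatibility law
\[
{\rm N}(X) \cap {\rm N}(Z_1) \cap \dots \cap {\rm N}(Z_n) = {\rm N}\bigl(X \cap {\rm N}(Z_1) \cap \dots \cap {\rm N}(Z_n)\bigr) \cap {\rm N}(Z_1) \cap \dots \cap {\rm N}(Z_n),
\]
call it (${\rm E}_n$). Using propositional variables $p_i, q, r$ with valuations $V(p_i) = Z_i$, $V(q) = X$, $V(r) = X \cap {\rm N}(Z_1) \cap \dots \cap {\rm N}(Z_n)$, one checks a frame validates ${\rm R}_n$ iff it satisfies ${\rm E}_n$. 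Then I would build a canonical neighbourhood model on the set $\mathcal{W}$ of maximal ${\sf E}_\mathbb{N}$-consistent sets of $\mathcal{L}_\blacksquare$-formulas, using $\mathcal{R}({\rm \Gamma}) = \{{\rm \Delta} : \forall\chi\,(\blacksquare\chi \in {\rm \Gamma} \Rightarrow \blacksquare\chi \in {\rm \Delta})\}$ and the localised neighbourhood function $\mathcal{N}(X) = \{{\rm \Gamma} : \exists\,\blacksquare\psi \in {\rm \Gamma},\; X \cap \mathcal{R}({\rm \Gamma}) = |\psi| \cap \mathcal{R}({\rm \Gamma})\}$, mirroring the choice of $\mathcal{N}^{loc}$ used earlier for {\sf NS4}.

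The main obstacle will be showing that this canonical $\mathcal{N}$ satisfies every ${\rm E}_n$, and simultaneously that a Truth Lemma holds. The critical step is that when ${\rm \Gamma} \in \mathcal{N}(Z_1) \cap \dots \cap \mathcal{N}(Z_n)$, one obtains witnesses $\blacksquare\psi_i \in {\rm \Gamma}$ with $Z_i \cap \mathcal{R}({\rm \Gamma}) = |\psi_i| \cap \mathcal{R}({\rm \Gamma})$; then for any ${\rm \Delta} \in \mathcal{R}({\rm \Gamma})$ one needs $\mathcal{R}({\rm \Delta}) \subseteq \mathcal{R}({\rm \Gamma})$ so that each $\blacksquare\psi_i$ transfers to ${\rm \Delta}$ and the intersections collapse correctly. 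This transitivity-like property of $\mathcal{R}$ follows from its definition via stability of $\blacksquare$-formulas. Once ${\rm E}_n$ is verified for all $n$, the Truth Lemma proceeds by the standard induction, with the $\blacksquare$-step handled exactly by the definition of $\mathcal{N}$. Finally, I would endow the canonical frame with the reflexive transitive relation $\leq\,\coloneqq\,\mathcal{R}$, check that $\mathcal{N}(X)$ is automatically upward closed and that the locality law~(\ref{eq:localm}) holds, so that the canonical ${\sf E}_\mathbb{N}$-model becomes a genuine {\sf NS4}-model. A $\mathcal{L}_\blacksquare$-formula not derivable in ${\sf E}_\mathbb{N}$ is refuted in this model, hence also in {\sf NS4}, which closes the argument.
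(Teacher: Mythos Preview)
Your proposal is correct and follows essentially the same route as the paper: derive each ${\rm R}_n$ in {\sf NS4} via axiom~(\ref{eq:modal2}), necessitation, and axiom~(\ref{eq:modaln}); establish the frame correspondence ${\rm R}_n \Leftrightarrow {\rm E}_n$ with the very valuations you indicate; build the canonical ${\sf E}_\mathbb{N}$-model on maximal consistent sets using exactly your $\mathcal{R}$ and $\mathcal{N}$; verify each ${\rm E}_n$ via the witnesses $\blacksquare\psi_i$ and the inclusion $\mathcal{R}({\rm \Delta}) \subseteq \mathcal{R}({\rm \Gamma})$; and finally set $\leq\,=\,\mathcal{R}$ to upgrade to an {\sf NS4}-model. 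The match with the paper is essentially step-for-step.
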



We conclude the section, and the whole article, with a brief sketch of how to obtain a modal companion for contraposition logic {\sf CoPC}. We recall that it is defined by the axiom $(p \to q) \to (\neg q \to \neg p)$. We consider the bi-modal logic {\sf CoS4} obtained by replacing the axiom~(\ref{eq:modaln}) by  
\begin{equation}\label{eq:modalcopc}
\square (p \to q ) \to (\blacksquare q \to \blacksquare p).
\end{equation}
The corresponding N-semantics is given by frames $\mathfrak{F} =  \langle W, \leq , {\rm N} \rangle$, where $\langle W, \leq \rangle$ is a poset, and {\rm N} is antitone, i.e., for $X, Y \in \mathcal{P}(W)$,
\[
X \subseteq Y \Longrightarrow {\rm N}(Y) \subseteq{\rm N}(X).
\]

\noindent The logic {\sf CoPC} can be translated into the system obtained by replacing~(\ref{eq:modaln}) by~(\ref{eq:modalcopc}) via the translation defined at the beginning of the section.

\begin{theorem}
For every formula $\varphi$ we have 
\[
\mathsf{N} \vdash \varphi \Longleftrightarrow {\sf CoS4} \vdash (\varphi)^{\square}.
\]
\end{theorem}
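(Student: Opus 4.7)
The plan is to follow the template of the preceding $\mathsf{N}$-to-$\mathsf{NS4}$ companionship theorem, this time with axiom~(\ref{eq:modalcopc}) playing the role of~(\ref{eq:modaln}) throughout. A preliminary observation is that~(\ref{eq:modaln}) is itself derivable from~(\ref{eq:modalcopc})---apply~(\ref{eq:modalcopc}) in both senses to $\square(p \to q)$ and $\square(q \to p)$ extracted from $\square(p \leftrightarrow q)$---so $\mathsf{CoS4}$ extends $\mathsf{NS4}$.

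For the forward direction $\mathsf{N} \vdash \varphi \Rightarrow \mathsf{CoS4} \vdash \varphi^{\square}$, one may either invoke the preceding companionship theorem directly (combined with the inclusion $\mathsf{NS4} \subseteq \mathsf{CoS4}$ just noted) or give an explicit derivation. The explicit route produces the translation of the defining $\mathsf{N}$-axiom, namely $\square\bigl(\square(\square p \leftrightarrow \square q) \to \square(\blacksquare \square p \leftrightarrow \blacksquare \square q)\bigr)$, by applying~(\ref{eq:modalcopc}) twice to obtain $\blacksquare\square p \leftrightarrow \blacksquare\square q$ from $\square(\square p \leftrightarrow \square q)$, then using necessitation together with axiom~(4) to place the required nested boxes, mirroring the derivation carried out for the $\mathsf{NS4}$ case.

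The converse direction requires more work. The plan is to argue contrapositively via a $\mathsf{CoS4}$-analog of Lemma~\ref{l:frame}: given an $\mathsf{N}$-countermodel $\mathfrak{M} = \langle W, \leq, {\rm N}, V\rangle$ of $\varphi$, build a $\mathsf{CoS4}$-countermodel $\mathfrak{M}^{*} = \langle W, \leq, {\rm N}^{\circ}, V\rangle$ of $\varphi^{\square}$, where ${\rm N}^{\circ} \colon \mathcal{P}(W) \to \mathcal{U}(W,\leq)$ is antitone on $\mathcal{P}(W)$ (so that the frame is a $\mathsf{CoS4}$-frame) and agrees with ${\rm N}$ on the upsets $V(\psi)$ appearing in the induction; then verify $\mathfrak{M}, w \models \psi$ iff $\mathfrak{M}^{*}, w \models_{\square} \psi^{\square}$ by induction on $\psi \in \mathcal{L}({\sf Prop})$, exactly as in Lemma~\ref{l:frame}. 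The hard part will be the design of ${\rm N}^{\circ}$: the extension used in the $\mathsf{NS4}$ case need not be antitone, while the natural antitone candidate ${\rm N}^{\circ}(X) \coloneqq \bigcap\{{\rm N}(Y) : Y \in \mathcal{U}(W,\leq),\; X \subseteq Y\}$ may undercut ${\rm N}(V(\psi))$ whenever some larger upset $Y \supseteq V(\psi)$ carries a smaller value of ${\rm N}(Y)$. The core technical task is thus to exhibit an antitone extension that coincides with ${\rm N}$ on the finitely many upsets corresponding to negated subformulas of $\varphi$, possibly after first passing to a suitable generated submodel of $\mathfrak{M}$ where ${\rm N}$ is already compatible with antitonicity.
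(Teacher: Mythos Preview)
The theorem as printed contains a typo: the left-hand side should read $\mathsf{CoPC} \vdash \varphi$, not $\mathsf{N} \vdash \varphi$. The paper's own proof makes this clear---on the forward side it derives the translation of the \emph{contraposition} axiom $(p \to q) \to (\neg q \to \neg p)$, and on the converse side it starts from ``any N-frame for $\mathsf{CoPC}$''---as does the surrounding text, which introduces $\mathsf{CoS4}$ as the intended companion of $\mathsf{CoPC}$.

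You have taken the statement literally, and this is what sinks your converse. Since $\mathsf{CoS4} \supseteq \mathsf{NS4}$ (as you note) and the paper derives $\bigl((p \to q) \to (\neg q \to \neg p)\bigr)^{\square}$ in $\mathsf{CoS4}$, while $\mathsf{N} \not\vdash (p \to q) \to (\neg q \to \neg p)$, the biconditional with $\mathsf{N}$ on the left is simply false. The obstruction is visible already on the two-point chain $W = \{w,t\}$, $w < t$, with ${\rm N}(\{t\}) = \{t\}$ and ${\rm N}(W) = W$: locality~(\ref{eq:locality}) holds, the contraposition axiom fails at $w$ under $V(p) = \{t\}$, $V(q) = W$, and no antitone ${\rm N}^{\circ}$ on $\mathcal{P}(W)$ can agree with ${\rm N}$ on both of these upsets, since $\{t\} \subseteq W$ would force $W = {\rm N}^{\circ}(W) \subseteq {\rm N}^{\circ}(\{t\}) = \{t\}$. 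The ``hard part'' you flagged is therefore not a gap to be closed but a genuine obstruction. For the intended $\mathsf{CoPC}$ statement the converse is immediate, as the paper indicates: a $\mathsf{CoPC}$-frame already has ${\rm N}$ antitone on upsets, so ${\rm N}^{\circ}(X) \coloneqq {\rm N}({\uparrow}X)$ (with ${\uparrow}X$ the upward closure) is an antitone extension to $\mathcal{P}(W)$ agreeing with ${\rm N}$ on all upsets, and the analogue of Lemma~\ref{l:frame} applies verbatim. Your forward direction is fine as far as it goes, but for the intended statement one must additionally derive the translation of the $\mathsf{CoPC}$ axiom in $\mathsf{CoS4}$---which is exactly the calculation the paper supplies.
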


\begin{proof}
For the left-to-right direction, consider the translation of the axiom $(p \to q) \to (\neg q \to \neg p)$, i.e., $\square\big{(}\square (\square p \to \square q) \to \square (\blacksquare \square q \to \blacksquare \square p)\big{)}.$ We have: 
\[
\begin{array}{lclr}
& \vdash & \square (\square p \to \square q) \to (\blacksquare \square q \to \blacksquare \square p) & \text{By axiom }(\ref{eq:modalcopc})\\[.1in]
& \vdash & \square\square (\square p \to \square q) \to \square (\blacksquare \square q \to \blacksquare \square p) \,\,\,\,\,\,\,&\,\,\,\,\,\,\,\, \text{By necessitation}\\ [.1in]
& \vdash &  \square (\square p \to \square q) \to  \square\square (\square q \to \square p) & \text{By (4)}\\ [.1in]
& \vdash &  \square (\square p \to \square q) \to \square (\blacksquare \square q \to \blacksquare \square p)&\\ [.1in]
& \vdash &  \square\big{(}\square (\square p \to \square q) \to \square (\blacksquare \square q \to \blacksquare \square p)\big{)}&\,\,\,\,\,\,\,\, \text{By necessitation}
\end{array}
\]
For the reverse direction, it suffices again to observe that from any N-frame for {\sf CoPC} we can obtain an N-frame for {\sf CoS4}.  
\end{proof}

We finish by highlighting a few further questions and research directions. First,  it is reasonable to expect the $\{\land, \lor, \to, \blacksquare\}$-fragment of {\sf CoS4} to be axiomatized by the following countably many rules
\[
\infer[]{(\blacksquare p_1 \land \dots \land \blacksquare p_n) \to (\blacksquare r \to \blacksquare q)}{(\blacksquare p_1 \land \dots \land \blacksquare p_n) \to (q \to r)}
\]
for $n \,{\in}\, \mathbb{N}$, the argument being similar to the one for Theorem~\ref{t:blackbox}. This raises the question whether there exist finite axiomatizations for the $\{\land, \lor, \to, \blacksquare\}$\nbd{-}fragments of the considered bi-modal systems.

More generally, the notion and properties of bi-modal companions of  subminimal logics naturally lead to the question whether the theory of modal companions of intermediate logics (\cite[Section 9.6]{CZ97}) can be paralleled in this case. 
For example, do there always exist the least and greatest bi-modal companions of subminimal logics? Can one prove an analogue of the Blok-Esakia theorem in this setting? Also the study of the interplay between the two modalities could be taken further through a comparison with widely studied non-normal modal logics, such as classical modal logic {\sf E} and monotonic modal logic {\sf M}.

Finally, it will be interesting to investigate the relations between our semantics for the logic {\sf CoPC} and the one proposed by Hazen~\cite{Haz95}. 
\vspace*{1cm} \\
\newpage

\end{document}